\documentclass[12pt]{amsart}

 \usepackage[pagewise]{lineno}
 

\usepackage[colorlinks=true,pagebackref,hyperindex,citecolor=blue,linkcolor=red]{hyperref}
\usepackage{amsmath}
\usepackage{amsfonts}
\usepackage{amssymb}
\usepackage{setspace}
\usepackage{moreverb}
\usepackage[latin1]{inputenc}
\usepackage[T1]{fontenc}
\usepackage{amssymb}
\usepackage{tikz}
\usepackage{color}
\usepackage[all]{xy}
\usepackage{xfrac}
\usepackage[top=1in, bottom=1in, left=1in, right=1in]{geometry}
\usepackage{mathrsfs}

\usetikzlibrary{shapes}

\newtheorem{theoremx}{Theorem}

\newtheorem{theorem}{Theorem}[section]

\newtheorem{corollary}[theorem]{Corollary}
\newtheorem{lemma}[theorem]{Lemma}
\newtheorem{proposition}[theorem]{Proposition}

\theoremstyle{definition}
\newtheorem{definition}[theorem]{Definition}

\newtheorem{notation}[theorem]{Notation}

\newtheorem{example}[theorem]{Example}

\newtheorem{conjecture}[theorem]{Conjecture}

\newtheorem{remark}[theorem]{Remark}
\numberwithin{equation}{subsection}



\newcommand{\set}[1]{\left\{ #1 \right\}}
\newcommand{\NN}{\mathbb{N}}
\newcommand{\CC}{\mathbb{C}}

\newcommand{\cM}{\mathcal{T}}
\newcommand{\cL}{\mathcal{L}}

\newcommand{\cJ}{\mathcal{J}}
\newcommand{\cT}{\mathcal{T}}

\newcommand{\mA}{\mathfrak{A}}

\newcommand{\pT}{\CC[x_1,\ldots,x_s]/\langle f,\cJ_n(f)\rangle}
\newcommand{\pTdos}{\CC[x_1,x_2]/\langle f,\cJ_n(f)\rangle}

\newcommand{\Der}{\operatorname{Der}}
\newcommand{\Jac}{\operatorname{Jac}}
\newcommand{\conv}{\CC\{x_1,\ldots,x_s\}}

\newcommand{\pol}{\CC[x_1,\ldots,x_s]}
\newcommand{\Lsc}{B}
\newcommand{\Ls}{A}

\newcommand{\Al}{\mathcal{A}}

\newcommand{\Card}{\operatorname{Card}}





\begin{document}
\newcommand{\tens}{\otimes}
\newcommand{\hhtest}[1]{\tau ( #1 )}
\renewcommand{\hom}[3]{\operatorname{Hom}_{#1} ( #2, #3 )}

\allowdisplaybreaks

\title[Higher Jacobian matrix of weighted homogeneous polynomials]{Higher Jacobian matrix of weighted homogeneous polynomials and derivation algebras}

\author[W. Badilla-C\'espedes]{Wágner Badilla-Céspedes}
\address{Centro de Ciencias Matemáticas, UNAM, Campus Morelia, Morelia, Michoacán, México.}
\email{wagner@matmor.unam.mx }

\author[A. Castorena]{Abel Castorena}

\email{abel@matmor.unam.mx}

\author[D. Duarte]{Daniel Duarte}

\email{adduarte@matmor.unam.mx}

\subjclass[2020]{14B05, 32S05, 13N15.}
\keywords{Higher Nash blowup local algebras, higher-order Jacobian matrix, weighted homogeneous isolated hypersurface singularities}

\maketitle

\begin{abstract}
We prove that the ideal generated by the maximal minors of the higher-order Jacobian matrix of a weighted homogeneous polynomial is also weighted homogeneous. As an application, we give a partial answer to a conjecture concerning the non-existence of negative weight derivations on the higher Nash blowup local algebra of a hypersurface.
\end{abstract}


\section*{Introduction}

The  Jacobian matrix of order $n$ is a higher-order version of the classical Jacobian matrix. It was introduced as a tool for computing the higher Nash blowup of a hypersurface \cite{Duarte2017}. Higher-order Jacobian matrices were later rediscovered and further developed by several authors \cite{BJNB2019,BD2020,MR4229623}. Ever since, this matrix has seen a wide variety of applications: in the study of higher Nash blowups  \cite{Duarte2017,MR4229623}, the study of invariants of rings in positive characteristic \cite{BJNB2019}, the study of homological properties of the module of higher K\"ahler differentials \cite{BD2020,AD2021}, relations with Hasse-Schmidt derivations and jet schemes \cite{LDS2023, Barajas2023}, invariants of germs of hypersurface singularities \cite{HMYZ2023,LY2023}, motivic zeta functions \cite{LY2023}. In addition to these applications, new questions has been raised regarding general properties of higher Jacobian matrices.

Let $f\in\conv$, where $\conv$ denotes the ring of convergent power series at the origin. Let $\Jac_n(f)$ be the higher Jacobian matrix of $f$ and $\cJ_n(f)$ the ideal generated by the maximal minors of $\Jac_n(f)$ (see Definition \ref{Jacn}). N. Hussain, G. Ma, S. S.-T. Yau, and H. Zuo defined the higher Nash blowup local algebra as the quotient $\cM_n(f)=\conv/\langle f,\cJ_n(f) \rangle$ \cite{HMYZ2023}. For $n=1$, this is the classical Tjurina algebra. The authors proposed several conjectures regarding $\cT_n(f)$: invariance under contact equivalence, homogeneity properties, and bounds for their dimensions. They also provided some evidence that supports the conjectures. These questions can be seen as higher-order analogues of classical results in singularity theory. For instance, a famous result by J. Mather and S. S.-T. Yau states that the Tjurina algebra is a complete invariant of isolated hypersurface singularities under contact equivalence \cite{MR0674404}. Moreover, higher-order versions of  the Tjurina algebra have been an object of intense study (see, for instance, \cite{MR2290112,MR3398724,MR3606998,MR4129535,MR4594780}).

Very recently, the conjecture regarding the invariance of $\cT_n(f)$ under contact equivalence was proved by Q. T. L\^{e} and T. Yasuda \cite{LY2023}. In this paper we are interested in exploring the questions regarding the homogeneity properties of the higher Jacobian matrix. The following is our first main theorem.

\begin{theoremx}[{see Theorem \ref{theorem:Tjurina-graded}}] \label{theorem-main-1}
Let $f\in \CC[x_1,\ldots,x_s]$ be a weighted homogeneous polynomial. Then $\cJ_n(f)$ is a weighted homogeneous ideal for every $n \in \NN$.
\end{theoremx}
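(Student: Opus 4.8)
The plan is to show that $\Jac_n(f)$ is a \emph{homogeneous matrix} with respect to the grading on $\CC[x_1,\ldots,x_s]$ determined by the weights of $f$, and then to invoke the general principle that the minors of a homogeneous matrix are themselves homogeneous. First I would fix the weights $w_1,\ldots,w_s$ and the weighted degree $d$ of $f$, and recall from Definition~\ref{Jacn} the precise shape of $\Jac_n(f)$: its entries are, up to nonzero rational scalars, the Hasse--Schmidt derivatives $D^\alpha(x^\beta f)=\tfrac{1}{\alpha!}\partial^\alpha(x^\beta f)$, where the row index $\beta$ and the column index $\alpha$ range over fixed finite sets of multi-indices. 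The central observation is that applying $D^\alpha$ lowers weighted degree by $\langle w,\alpha\rangle:=\sum_i w_i\alpha_i$, while multiplying by $x^\beta$ raises it by $\langle w,\beta\rangle$; hence each entry $D^\alpha(x^\beta f)$ is weighted homogeneous of degree
$$\bigl(d+\langle w,\beta\rangle\bigr)-\langle w,\alpha\rangle,$$
using the convention that the zero polynomial is homogeneous of every degree (this covers the entries that vanish because the prescribed degree is negative).

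The point to isolate is that this degree separates additively as $a_\beta-b_\alpha$, where $a_\beta=d+\langle w,\beta\rangle$ depends only on the row and $b_\alpha=\langle w,\alpha\rangle$ depends only on the column. Equivalently, $\Jac_n(f)$ represents a degree-preserving morphism of graded free modules once each source basis vector is placed in degree $b_\alpha$ and each target basis vector in degree $a_\beta$. Granting this separation, consider any maximal minor: choose a maximal square submatrix indexed by a set $P$ of rows and a set $Q$ of columns with $|P|=|Q|$ maximal. Its determinant is
$$\sum_{\sigma\colon P\to Q}\operatorname{sgn}(\sigma)\prod_{\beta\in P}\bigl(\Jac_n(f)\bigr)_{\beta,\sigma(\beta)},$$
and every summand is weighted homogeneous of degree $\sum_{\beta\in P}a_\beta-\sum_{\alpha\in Q}b_\alpha$, which is independent of the bijection $\sigma$. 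Therefore each maximal minor is weighted homogeneous, and so the ideal $\cJ_n(f)$ generated by these minors is a weighted homogeneous ideal, for every $n\in\NN$.

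The routine ingredients are the bookkeeping of which multi-indices index the rows and columns of $\Jac_n(f)$ and the Leibniz rule $D^\alpha(x^\beta f)=\sum_{\gamma\le\alpha}D^\gamma(x^\beta)\,D^{\alpha-\gamma}f$, which verifies entrywise homogeneity directly for anyone who prefers not to use that $x^\beta f$ is itself homogeneous. The only genuine point requiring care is confirming the additive separation of each entry's degree into a row-contribution plus a column-contribution; once that separation is in place, the homogeneity of all the maximal minors is automatic from the determinant expansion above. I therefore expect the main obstacle to be matching the concrete indexing conventions of Definition~\ref{Jacn} to the clean ``homogeneous matrix'' picture described here, in particular checking that no entry couples the row and column data in a way that would spoil the clean splitting $a_\beta-b_\alpha$.
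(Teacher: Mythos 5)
Your proposal is correct and follows essentially the same route as the paper: each entry of $\Jac_n(f)$ in the row $\beta$ and column $\alpha$ is weighted homogeneous of degree $(d+\beta\cdot w)-\alpha\cdot w$, so every nonzero term of the Leibniz expansion of a maximal minor has the common degree $\sum_{\beta\in P}(d+\beta\cdot w)-\sum_{\alpha\in Q}\alpha\cdot w$; the paper checks this permutation-independence by a short telescoping computation, which your additive splitting $a_\beta-b_\alpha$ renders automatic. One small correction: by Definition \ref{Jacn} the entry is the Hasse derivative $\frac{1}{(\alpha-\beta)!}\partial^{\alpha-\beta}(f)$ of $f$ itself, not a scalar multiple of $D^{\alpha}(x^{\beta}f)$, but both are weighted homogeneous of the same degree (and the convention that $0$ is homogeneous of every degree covers all vanishing entries, including those with $\alpha_i<\beta_i$), so your argument applies verbatim.
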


Another object introduced by the aforementioned authors is the algebra of derivations $\cL_n(f)=\Der(\cM_n(f))$. For $n=1$, this is known as the Yau algebra of $f$. A classical result in singularity theory states that the Yau algebra is a solvable Lie algebra for isolated hypersurface singularities \cite{Y1986,Y1991}. Moreover, the Yau algebra has been used to distinguish complex analytic structures of isolated hypersurface singularities \cite{MR1032879}. 

By Theorem \ref{theorem-main-1}, $\pT$ is a graded algebra whenever $f$ is a weighted homogeneous polynomial. In this case, the algebra $\Der(\pT)$ is also graded. There is a general conjecture on algebras of derivations of local Artinian graded algebras, known as Halperin conjecture, stating the non-existence of negative weight derivations over those algebras \cite{H1999}. This conjecture has been intensively studied \cite{M1982,FHT2001,CYZ2019,CCYZ2020,CHYZ2020}. Inspired by Halperin's question, N. Hussain, G. Ma, S. S.-T. Yau, and H. Zuo conjectured that there are no negative weight derivations on $\pT$. They proved their conjecture in the case $n=s=2$. Our second main result shows that the conjecture is true for $n\geq3$ and $s=2$.

\begin{theoremx}[{see Theorem \ref{theorem-derivation}}] \label{theorem-main-2}
Let $f\in\CC[x_1,x_2]$ be a weighted homogeneous polynomial of weight $(w_1,w_2)\in\NN^2$ and degree $d$ that defines an isolated hypersurface singularity. Suppose that $d\geq 2w_1\geq 2w_2>0$. Let $n\geq3$. Then $\Der(\pTdos)$ is non-negatively graded.

\end{theoremx}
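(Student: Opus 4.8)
The plan is to work throughout with the weighted grading provided by Theorem \ref{theorem-main-1}: since $\cJ_n(f)$ is weighted homogeneous, $A := \pTdos$ is a graded Artinian $\CC$-algebra with $I := \langle f, \cJ_n(f)\rangle$, and $\Der(A)=\bigoplus_\ell \Der(A)_\ell$ inherits a grading. It suffices to show $\Der(A)_\ell = 0$ for every $\ell<0$. A homogeneous derivation $D$ of weight $\ell$ lifts to a vector field $V = g_1\partial_1 + g_2\partial_2$ on $\CC[x_1,x_2]$ with $V(I)\subseteq I$, where $g_i$ is homogeneous of weighted degree $w_i+\ell$, and $D=0$ as a derivation of $A$ exactly when $g_1,g_2\in I$. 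I will use the description of $\cJ_n(f)$, for $n\ge 2$, as the ideal of $2\times 2$ minors of the gradient matrix with rows $\nabla(\partial^\alpha f)=(\partial^{\alpha+e_1}f,\partial^{\alpha+e_2}f)$, $|\alpha|\le n-1$; equivalently $\cJ_n(f)=\big(\{\partial^\alpha f,\partial^\beta f\}:|\alpha|,|\beta|\le n-1\big)$, where $\{\partial^\alpha f,\partial^\beta f\}:=\partial^{\alpha+e_1}f\,\partial^{\beta+e_2}f-\partial^{\alpha+e_2}f\,\partial^{\beta+e_1}f$ is the Jacobian determinant.

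Next I would pin down the shape of a negative-weight $D$ from the grading alone. Because $w_1\ge w_2>0$, the only weighted degree in $[0,w_2)$ is $0$, and every weighted degree in $[w_2,w_1)$ is a multiple of $w_2$. Since $\ell<0$ forces $w_i+\ell<w_i$, this gives $g_2=0$ unless $\ell=-w_2$ (whence $g_2$ is a constant), and forces $g_1=c\,x_2^{k}$ with $kw_2=w_1+\ell$ (or $g_1=0$). Thus every negative-weight homogeneous derivation is, up to scalar, of one of the forms $D_k=x_2^{k}\partial_1$ (weight $kw_2-w_1<0$, for $k\ge 0$ with $kw_2<w_1$) or $E=\partial_2$ (weight $-w_2$). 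So for $\ell\ne -w_2$ the space $\Der(A)_\ell$ is spanned by a single $D_k$, while $\Der(A)_{-w_2}$ is spanned by $E$ together with $D_{k_0}$, $k_0=w_1/w_2-1$, when $w_2\mid w_1$. Since every $x_2^{k}$ and every constant has degree $<w_1\le d/2<d$, a range in which $I$ agrees with $\cJ_n(f)$, it suffices to show that the only vector field $V$ of this restricted form with $V(I)\subseteq I$ has $g_1,g_2\in\cJ_n(f)$, hence $g_1=g_2=0$.

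For each candidate this becomes a finite list of membership tests, since $V$ preserves $I$ iff $V(P)\in I$ for each generator $P\in\{f\}\cup\{\{\partial^\alpha f,\partial^\beta f\}\}$. The key point is that $E=\partial_2$ and $D_k=x_2^{k}\partial_1$ raise the order of the partials: for instance $\partial_2\{\partial^\alpha f,\partial^\beta f\}=\{\partial^{\alpha+e_2}f,\partial^\beta f\}+\{\partial^\alpha f,\partial^{\beta+e_2}f\}$, and when $|\alpha|=n-1$ this produces genuine order-$n$ brackets, i.e. generators of $\cJ_{n+1}(f)$ that do not a priori lie in $I$. The problem therefore concentrates on these top-order brackets, together with the relations $\partial_2 f\in I$ (resp. $x_2^{k}\partial_1 f\in I$), which have degree $<d$ and so would have to lie in $\cJ_n(f)$. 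At weight $-w_2$ one must run these tests for the full two-parameter family $aD_{k_0}+bE$ and check that the resulting linear conditions on $(a,b)$ admit only the trivial solution.

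The hard part, and where the hypotheses $n\ge 3$ and $d\ge 2w_1\ge 2w_2>0$ enter, is proving this non-membership. My plan is to establish it by weighted-degree bookkeeping together with the isolated-singularity hypothesis: the condition $d\ge 2w_1$ bounds the degrees of $\partial_1 f,\partial_2 f$ (both exponents $d/w_1,d/w_2\ge 2$), which controls the degrees in which $\cJ_n(f)$ can meet $\Jac(f)$, while $n\ge 3$ guarantees enough independent top-order brackets that the order-$n$ combinations above cannot be rewritten inside $\cJ_n(f)+\langle f\rangle$. Concretely I would either (i) compare the Hilbert functions and socle degrees of $\cT_n(f)$ and $\cT_{n+1}(f)$ to see that the relevant graded pieces strictly grow, so that the bumped brackets are nonzero in $A$; or, failing a uniform statement, (ii) reduce $f$ to the normal forms of weighted-homogeneous isolated plane-curve singularities compatible with $d\ge 2w_1\ge 2w_2$ and verify non-membership directly. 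I expect the bookkeeping in (ii)---showing that $\partial_2 f$ and the top-order brackets are not accidentally absorbed by the many low-degree generators that $\cJ_n(f)$ acquires as $n$ grows---to be the main obstacle; the isolated-singularity assumption, that $\partial_1 f,\partial_2 f$ form a system of parameters, is ultimately what prevents such absorption and forces $c=0$.
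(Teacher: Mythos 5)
Your reduction of a negative-weight derivation to the one-variable candidates $c\,x_2^k\partial_{x_1}$ (plus possibly $\partial_{x_2}$) is sound and matches the first half of the paper's argument, but the proposal stops exactly where the actual work begins. The step you defer --- showing that the resulting membership conditions force $c=0$ --- is the entire content of the theorem, and neither of your proposed routes is carried out. The paper does not do any membership testing at all: its key input is Corollary \ref{coro:s=2}, a \emph{weighted-degree lower bound} stating that under $d\geq 2w_1\geq 2w_2>0$ and $n\geq 3$ every maximal minor of $\Jac_n(f)$ has weighted degree at least $d$. Hence the ideal $\langle f,\cJ_n(f)\rangle$ has no nonzero element of degree $<d$, so $D(f)$, which is homogeneous of degree $d+\ell<d$, must vanish identically in $\CC[x_1,x_2]$; the normal forms of $f$ from \cite[Lemma 2.1]{CXY1995} then give $\partial_{x_1}f\neq 0$ and $c=0$. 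This degree bound is the technical heart of the paper (Lemmas \ref{lemma:combinatorics}--\ref{lemma:general-sets} and Propositions \ref{lemma:special-case}--\ref{lemma:degree}, a delicate combinatorial analysis of which columns of $\Jac_n(f)$ can appear in a maximal minor), and your proposal contains no substitute for it. Without it, your plan (i) (comparing Hilbert functions of $\cT_n(f)$ and $\cT_{n+1}(f)$) is a hope rather than an argument, and plan (ii) is an unbounded case check; you yourself flag this as ``the main obstacle.''

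There is also a concrete error in your setup: you replace $\cJ_n(f)$ by the ideal of $2\times 2$ minors of the matrix with rows $\nabla(\partial^\alpha f)$, $|\alpha|\leq n-1$. That is not the paper's definition and is not equivalent to it. By Definition \ref{Jacn}, $\cJ_n(f)$ is generated by the \emph{maximal} ($M\times M$) minors of the $M\times(N-1)$ matrix $\Jac_n(f)$, whose entries are the divided partials $\frac{1}{(\alpha-\beta)!}\partial^{\alpha-\beta}f$ arranged by the index sets $\Lsc$ and $\Ls_1$. Already for $f=x_1^3-x_2^2$ and $n=2$ your ideal contains $x_1$ (from $\det\bigl(\begin{smallmatrix}6x_1&0\\0&-2\end{smallmatrix}\bigr)$), whereas the paper's $\cJ_2(f)$ is generated by $3\times 3$ minors of the displayed $3\times 5$ matrix and has no generators of such low degree. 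Since your entire strategy consists of membership tests in this ideal, working with the wrong generating set would invalidate the computation even if it were completed. To repair the proposal you would need to (a) use the correct $\cJ_n(f)$ and (b) establish the degree bound $dM-c\geq d$ for its maximal minors, which is precisely what Section 1 of the paper provides.
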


This paper is divided as follows. In the first section we prove Theorem \ref{theorem-main-1}. We also establish some facts regarding the weighted degree of the maximal minors of the higher Jacobian matrix. The second section contains the proof of Theorem \ref{theorem-main-2}.

\section{Higher-order Jacobian matrix of a weighted homogeneous polynomial}

The following notation will be constantly used throughout this paper. 

\begin{notation}\label{nota}
Let $n,s\in\NN_{\geq1}$. Given $\gamma=(\gamma_1,\ldots,\gamma_s)\in\NN^s$, denote $|\gamma|=\gamma_1+\cdots+\gamma_s$. For $\gamma,\gamma'\in\NN^s$ we denote $\gamma\cdot\gamma'$ the usual Euclidean inner product. In addition, consider
\begin{align*}
&\Lsc:=\{\beta\in\NN^s|0\leq|\beta|\leq n-1\},\\
&\Lsc_1:=\{\beta\in\NN^s|1\leq|\beta|\leq n-1\},\\
&\Ls:=\{\alpha\in\NN^s|0\leq|\alpha|\leq n\},\\
&\Ls_1:=\{\alpha\in\NN^s|1\leq|\alpha|\leq n\},\\
&C_n:=\{\alpha\in\NN^s||\alpha|=n\}.
\end{align*}
\end{notation}

\begin{remark}\label{card}
It is known that
\begin{align*}
&\Card(\Lsc)=\binom{s+n-1}{s}=:M,\\
&\Card(\Ls)=\binom{s+n}{s}=:N,\\
&\Card(C_n)=\binom{n+s-1}{s-1}=N-M=:l.
\end{align*}
\end{remark}

\begin{definition}[{\cite{Duarte2017,BJNB2019,BD2020}}] \label{Jacn}
Let $f\in\conv$. Denote
$$\Jac_n(f):=\left(\frac{1}{(\alpha-\beta)!}\frac{\partial^{\alpha-\beta}(f)}{\partial x^{\alpha-\beta}}\right)_{\substack{\beta\in\Lsc \\ \alpha\in\Ls_1}},$$
where we define $\displaystyle{\frac{1}{(\alpha-\beta)!}\frac{\partial^{\alpha-\beta}(f)}{\partial x^{\alpha-\beta}}=0},$ whenever $\alpha_i<\beta_i$ for some $i$. It is a $M\times (N-1)$-matrix. We call $\Jac_n(f)$ the higher-order Jacobian matrix of $f$ or the Jacobian matrix of order $n$ of $f$. We order this matrix increasingly using a graded lexicographical order and taking $x_1 < x_2 <\cdots< x_s$. Moreover, denote as $\cJ_n(f)$ the ideal generated by all maximal minors of $\Jac_n(f)$.
\end{definition}

\begin{remark}
The higher-order Jacobian matrix was originally introduced as a tool to compute the higher Nash blowup of a hypersurface \cite{MR2371378,Duarte2017}.
\end{remark}

\begin{example}\label{e3-1}
Let $f=x^3-y^2\in\CC[x,y]$. Then 
\[\Jac_2(f)=
\left( 
\begin{array}{ccccc}
             3x^2& -2y & 3x & 0 & -1 \\
             f & 0 & 3x^2 & -2y & 0 \\
             0 & f & 0 & 3x^2 & -2y\\
\end{array} 
\right).\] 
\end{example}

N. Hussain, G. Ma, S. S.-T. Yau, and H. Zuo proposed the following conjecture regarding homogeneity properties of $\Jac_n(f)$.

\begin{conjecture}[{\cite[Conjecture 1.7]{HMYZ2023}}]\label{conjecture:Tjurina-graded}
Let $f\in\pol$ be a weighted homogeneous polynomial. Then $\cJ_n(f)$ is a weighted homogeneous ideal.
\end{conjecture}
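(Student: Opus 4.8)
The plan is to exploit the fact that differentiation lowers weighted degree in a controlled way, so that every entry of $\Jac_n(f)$ is itself weighted homogeneous, and moreover the degree of the $(\beta,\alpha)$-entry depends on its position only through a ``row part'' and a ``column part''. This is precisely the situation in which all maximal minors come out homogeneous.

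First I would fix the weight vector $w=(w_1,\dots,w_s)$ and set $\deg x_i=w_i$, so that $f$ being weighted homogeneous of degree $d$ means $f(t^{w_1}x_1,\dots,t^{w_s}x_s)=t^d f(x)$. An elementary computation then shows that for any multi-index $\delta\in\NN^s$ the polynomial $\tfrac{1}{\delta!}\tfrac{\partial^{\delta}f}{\partial x^{\delta}}$ is again weighted homogeneous, now of degree $d-\delta\cdot w$ (with the convention that the zero polynomial is homogeneous of every degree). Applying this with $\delta=\alpha-\beta$, the $(\beta,\alpha)$-entry of $\Jac_n(f)$ is weighted homogeneous of degree
$$d-(\alpha-\beta)\cdot w \;=\;\big(d+\beta\cdot w\big)-\alpha\cdot w$$
whenever it is nonzero (the entries forced to vanish because $\alpha_i<\beta_i$ for some $i$ cause no difficulty). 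Thus the degree of the entry in row $\beta$ and column $\alpha$ splits as a row contribution $r_\beta:=d+\beta\cdot w$ minus a column contribution $c_\alpha:=\alpha\cdot w$.

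Next I would analyze a single maximal minor. Since $\Card(\Lsc)=M\leq N-1$, a maximal minor is the determinant of the $M\times M$ submatrix obtained by keeping all rows $\beta\in\Lsc$ and selecting an $M$-element subset of columns $S\subseteq\Ls_1$. Expanding this determinant by the Leibniz formula yields a signed sum of products $\prod_{\beta\in\Lsc}(\text{entry})_{\beta,\sigma(\beta)}$ over bijections $\sigma\colon\Lsc\to S$. Every nonzero such product is weighted homogeneous, and because each factor has degree $r_\beta-c_{\sigma(\beta)}$, its total degree is
$$\sum_{\beta\in\Lsc}\big(r_\beta-c_{\sigma(\beta)}\big)=\sum_{\beta\in\Lsc}r_\beta-\sum_{\alpha\in S}c_\alpha=Md+\sum_{\beta\in\Lsc}\beta\cdot w-\sum_{\alpha\in S}\alpha\cdot w,$$
where the middle equality uses that $\sigma$ is a bijection from $\Lsc$ onto $S$. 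Crucially, this value is independent of $\sigma$, so all surviving terms in the expansion share one fixed weighted degree; hence the minor is itself weighted homogeneous.

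Finally, $\cJ_n(f)$ is by definition generated by the maximal minors of $\Jac_n(f)$, each of which is weighted homogeneous by the previous step, so $\cJ_n(f)$ is a weighted homogeneous ideal. I do not anticipate a serious obstacle here: the argument is really the general principle that a matrix whose entries carry degrees of the form $r_\beta-c_\alpha$ (equivalently, a graded morphism of graded free modules) has homogeneous minors. The only points requiring care are the bookkeeping of the degrees and the observation that vanishing entries, whether forced by $\alpha_i<\beta_i$ or accidental, never spoil homogeneity, since only the nonzero products contribute and those all carry the common degree computed above.
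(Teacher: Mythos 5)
Your proposal is correct and follows essentially the same route as the paper's proof of Theorem \ref{theorem:Tjurina-graded}: each entry is weighted homogeneous of degree $d-(\alpha-\beta)\cdot w$, and the Leibniz expansion of a maximal minor consists of products whose common degree is independent of the permutation because the column indices are merely permuted. Your splitting of the degree into a row part $d+\beta\cdot w$ and a column part $\alpha\cdot w$ is just a cleaner packaging of the paper's telescoping computation.
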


The aforementioned authors verified Conjecture \ref{conjecture:Tjurina-graded} in the case $s=n=2$ \cite[Lemma 4.1]{HMYZ2023}. The following theorem gives a positive answer to Conjecture \ref{conjecture:Tjurina-graded} for arbitrary $n$ and $s$.

\begin{theorem}\label{theorem:Tjurina-graded}
Let $f$ be a weighted homogeneous polynomial in $\CC[x_1,\ldots,x_s]$ of weight $w \in \NN^s$ and degree $d$. Then $\cJ_n(f)$ is a weighted homogeneous ideal for every $n \in \NN$.   
\end{theorem}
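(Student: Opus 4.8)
The plan is to exploit the fact that every entry of $\Jac_n(f)$ is itself weighted homogeneous and that the pattern of entry-degrees splits additively into a row-contribution and a column-contribution. Since $f$ has weight $w$ and degree $d$, differentiating once with respect to $x_i$ lowers the weighted degree by $w_i$; hence for any $\gamma\in\NN^s$ the derivative $\frac{1}{\gamma!}\frac{\partial^{\gamma}f}{\partial x^{\gamma}}$ is weighted homogeneous of degree $d-w\cdot\gamma$, with the convention that the zero polynomial is homogeneous of every degree. Applying this with $\gamma=\alpha-\beta$, the $(\beta,\alpha)$-entry of $\Jac_n(f)$ is weighted homogeneous of degree $d-w\cdot(\alpha-\beta)=(w\cdot\beta)+(d-w\cdot\alpha)$; the entries set to $0$ by the convention of Definition \ref{Jacn} cause no trouble, being homogeneous of every degree.

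First I would make this bookkeeping precise by assigning to each row index $\beta\in\Lsc$ the integer $r_\beta:=w\cdot\beta$ and to each column index $\alpha\in\Ls_1$ the integer $c_\alpha:=d-w\cdot\alpha$. The computation above shows that the $(\beta,\alpha)$-entry is weighted homogeneous of degree exactly $r_\beta+c_\alpha$; that is, $\Jac_n(f)$ is a graded matrix whose entry-degrees factor as the sum of a row-weight and a column-weight.

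Next I would note that $M=\Card(\Lsc)\leq N-1=\Card(\Ls_1)$, which holds because $N-M=l\geq 1$ for $n\geq 1$ by Remark \ref{card}; hence the maximal minors are the $M\times M$ determinants obtained by choosing $M$ of the $N-1$ columns. Fix such a minor, determined by a set $S$ of $M$ columns, and expand its determinant as a signed sum over bijections $\sigma\colon\Lsc\to S$. Each term is a product $\prod_{\beta\in\Lsc}(\text{the }(\beta,\sigma(\beta))\text{-entry})$, whose weighted degree is
\[
\sum_{\beta\in\Lsc}\bigl(r_\beta+c_{\sigma(\beta)}\bigr)=\sum_{\beta\in\Lsc}r_\beta+\sum_{\alpha\in S}c_\alpha,
\]
which is independent of $\sigma$. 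Thus every monomial appearing in the minor has the same weighted degree, so the minor is a weighted homogeneous polynomial (terms that vanish by the zero-entry convention do not affect this). Consequently $\cJ_n(f)$ is generated by weighted homogeneous polynomials, and an ideal generated by weighted homogeneous elements is weighted homogeneous, which is the claim.

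I do not expect a genuine obstacle here: the only points requiring care are the convention that makes some entries vanish, which is harmless since $0$ is homogeneous of every degree, and the rectangular shape of the matrix, which is handled by the inequality $M\leq N-1$ and by working with $M\times M$ submatrices. The heart of the argument is simply the additive splitting of the entry-degrees into row and column parts, which forces each maximal minor to be homogeneous.
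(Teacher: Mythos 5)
Your proof is correct and follows essentially the same route as the paper's: expand each maximal minor via the Leibniz formula and observe that every nonzero term is weighted homogeneous of the same degree. The only cosmetic difference is that you make the permutation-independence of that degree immediate by splitting each entry-degree as a row-weight plus a column-weight, whereas the paper verifies it with a short telescoping computation; both yield the same degree $dM-c$ for the minor.
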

\begin{proof}
Let $L$ be a matrix formed by taking $M$ columns of $\Jac_n(f)$. Let $B=\{\beta(1),\ldots,\beta(M)\}$ and $\{\alpha(1),\ldots,\alpha(M)\}\subset\Ls_1$ be the sets of vectors indexing the rows and columns of $L$, respectively. Denote the entries of $L$ as $L_{\beta(i)\alpha(j)}$ with $1 \leq i,j \leq M$. Recall that $$L_{\beta(i)\alpha(j)}=\frac{1}{(\alpha(j)-\beta(i))!}\frac{\partial^{\alpha(j)-\beta(i)}(f)}{\partial x^{\alpha(j)-\beta(i)}}.$$ 

Set $I=\set{\theta \in S_{M} \;| \; L_{\beta(i)\alpha(\theta(i))}\not =0 \; \mathrm{for \, every}\; 1 \leq i \leq M}$, where $S_{M}$ denotes the symmetric group of $M$ elements. We have that 
$$\det(L)=\sum_{\theta \in I}\operatorname{sgn}(\theta) L_{\beta(1)\alpha(\theta(1))}\cdots L_{\beta(M)\alpha(\theta(M))}.$$ 
Notice that $f$ being weighted homogeneous of degree $d$ implies that $L_{\beta(i)\alpha(\theta(i))}$ is a weighted homogeneous polynomial of degree $d-(\alpha(\theta(i))-\beta(i))\cdot w$. Hence, $L_{\beta(1)\alpha(\theta(1))}\cdots L_{\beta(M)\alpha(\theta(M))}$ is a weighted homogeneous polynomial of degree $dM-\sum_{i=1}^{M}(\alpha(\theta(i))-\beta(i))\cdot w$.

For every $\sigma \in I$ we claim that 
$$\sum_{i=1}^{M}(\alpha(\sigma(i))-\beta(i))\cdot w=\sum_{i=1}^{M}(\alpha(i)-\beta(i))\cdot w.$$  
Indeed, 
\begin{align*}
    0&=\sum_{i=1}^{M}(\alpha(\sigma(i))-\alpha(i))\cdot w \\
    &=\sum_{i=1}^{M}(\alpha(\sigma(i))-\beta(i) +\beta(i) -\alpha(i))\cdot w \\
    &=\sum_{i=1}^{M}(\alpha(\sigma(i))-\beta(i))\cdot w-\sum_{i=1}^{M}(\alpha(i)-\beta(i))\cdot w.
\end{align*}
Taking $c= \sum_{i=1}^{M}(\alpha(i)-\beta(i))\cdot w$, we conclude $L_{\beta(1)\alpha(\theta(1))}\cdots L_{\beta(M)\alpha(\theta(M))}$ is a weighted homogeneous polynomial of weight $w$ and degree $dM-c$ for every $\theta \in I$. Therefore, $\det(L)$ is a weighted homogeneous polynomial of weight $w$ and degree $dM-c$.
\end{proof}

Notice that the proof of the previous theorem also exhibited the degree of the maximal minors of $\Jac_n(f)$. Our next goal is to give lower bounds for those degrees, in some special cases.

\subsection{Homogeneous case}

Let $f$ be a homogeneous polynomial in $\CC[x_1,\ldots,x_s]$ of degree $d$, and let $n\geq 2$, $s\geq2$. Let $g$ be the determinant of the submatrix of $\Jac_n(f)$ formed by the  columns indexed by $\{\alpha(1),\ldots,\alpha(M)\}\subset A_1$ and the rows indexed by $B=\{\beta(1),\ldots,\beta(M)\}$. From the proof of Theorem \ref{theorem:Tjurina-graded}, $g$ is a homogeneous polynomial of degree $dM-c$, where
\begin{align*}
c&=\sum_{i=1}^{M}|\alpha(i)-\beta(i)| =\sum_{i=1}^{M}|\alpha(i)|-\sum_{i=1}^M |\beta(i)|.
\end{align*}

\begin{lemma}\label{lemma:degree-homogeneous} 
 With the previous notation we have $sM\geq c+s$.
 \end{lemma}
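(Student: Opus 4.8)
The plan is to turn the inequality into a count of the columns that are \emph{omitted} when the submatrix for $g$ is formed. Since the rows of that submatrix are all of $B$, the second sum in $c=\sum_{i}|\alpha(i)|-\sum_{i}|\beta(i)|$ is the fixed constant $\sum_{\beta\in B}|\beta|$, and $c$ depends only on the chosen $M$ columns of $A_1$. As $\Card(A_1)=N-1=M+l-1$, choosing $M$ columns amounts to discarding exactly $l-1$ of them. Writing $O$ for the sum of the norms $|\alpha|$ over the discarded columns, I would first record
\[
c=\Bigl(\sum_{\alpha\in A_1}|\alpha|-\sum_{\beta\in B}|\beta|\Bigr)-O .
\]

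The first step is to evaluate the bracketed difference once and for all. Splitting $A_1=\{\alpha:1\le|\alpha|\le n-1\}\sqcup C_n$ and $B=\{0\}\sqcup\{\beta:1\le|\beta|\le n-1\}$, the two middle blocks are the same index set and cancel, leaving $\sum_{\alpha\in C_n}|\alpha|=nl$ from $A_1$ and $0$ from the zero multi-index of $B$. Thus the bracket equals $nl$. I would then rewrite this using the elementary identity $n\binom{s+n-1}{s-1}=s\binom{s+n-1}{s}$ (immediate from $k\binom{m}{k}=(m-k+1)\binom{m}{k-1}$ with $m=s+n-1$, $k=s$), which says precisely $nl=sM$. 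Consequently $c=sM-O$, and the lemma is equivalent to the single inequality $O\ge s$.

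The last step is this bound on $O$. Every multi-index in $A_1$ has norm at least $1$, so the sum over the $l-1$ omitted ones satisfies $O\ge l-1$; it thus suffices to check $l\ge s+1$. Because $n\ge2$, monotonicity of binomial coefficients gives $l=\binom{s+n-1}{s-1}\ge\binom{s+1}{s-1}=\binom{s+1}{2}=\tfrac{s(s+1)}{2}$, and $\tfrac{s(s+1)}{2}\ge s+1$ holds for all $s\ge2$. Hence $O\ge l-1\ge s$, giving $c\le sM-s$, i.e. $sM\ge c+s$.

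I do not expect a genuine obstacle here: the argument is pure bookkeeping of norms once the identity $nl=sM$ is in hand. The two points that require a little attention are the cancellation of the middle blocks (which hinges on the rows being exactly $B$, so that $\sum_{\beta\in B}|\beta|$ is independent of the submatrix) and the reduction of everything to $O\ge s$. Note that the resulting estimate holds for \emph{every} choice of $M$ columns, not only those giving $g\neq0$, and that it is sharp: for $s=n=2$ one omits the two norm-one columns, so $O=2=s$ and $c=sM-s$.
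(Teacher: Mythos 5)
Your proof is correct. It takes a slightly different, and in fact cleaner, route than the paper's: where the paper fixes the subset $\mathcal{A}$ of the $M-l$ columns of smallest norm, checks $\mathcal{A}\subset B_1$, and then combines an upper bound $\sum_i|\alpha(i)|\le\sum_{\mathcal{A}}|\alpha|+sM$ with a lower bound $\sum_{\beta\in B}|\beta|\ge\sum_{\mathcal{A}}|\alpha|+s$, you instead pass to the complement and obtain the \emph{exact} identity $c=sM-O$, with $O$ the norm-sum over the $l-1$ omitted columns, via the cancellation $\sum_{A_1}|\alpha|-\sum_{B}|\beta|=\sum_{C_n}|\alpha|=nl=sM$. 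Both arguments ultimately rest on the same two arithmetic facts, namely $nl=sM$ and $l-1\ge s$ (each relying on $n,s\ge2$), so they are close in substance; but your version replaces two inequalities by one identity plus one inequality, makes transparent that the estimate holds for every choice of $M$ columns, and yields the sharpness characterization (equality exactly when the omitted columns are $s$ columns of norm one, forcing $l-1=s$) which the paper's formulation does not exhibit. The only points worth double-checking, which you handle correctly, are that $\Card(A_1)=N-1=M+l-1$ (so exactly $l-1$ columns are discarded) and that $l=\binom{s+n-1}{s-1}\ge\binom{s+1}{2}\ge s+1$.
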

\begin{proof}
We can assume that $1\leq|\alpha(1)|\leq\cdots\leq|\alpha(M)|\leq n$. Recall the notation from Remark \ref{card} and denote $\mathcal{A}:=\{\alpha(1),\ldots,\alpha(M-l)\}$. Notice that $\Al\subset B_1$, thus
\begin{align*}
\sum_{i=1}^M|\alpha(i)| &\leq \sum_{\alpha\in\mathcal{A}}|\alpha|+\sum_{\alpha \in C_n}|\alpha|=\sum_{\alpha\in\mathcal{A}}|\alpha|+ln=\sum_{\alpha\in\mathcal{A}}|\alpha|+sM.
\end{align*}  

On the other hand, $\Card(B_1\setminus\Al)=(M-1)-(M-l)=l-1\geq s$. This implies
$$\sum_{i=1}^M|\beta(i)|=\sum_{\beta\in B_1}|\beta| \geq \sum_{\alpha\in\Al}|\alpha|+s.$$

From the above, 
 \begin{align*}
c+s &= \sum_{i=1}^M|\alpha(i)|-\sum_{i=1}^{M}|\beta(i)| + s\\
&\leq sM+\sum_{\alpha \in \mathcal{A}}|\alpha|-\sum_{i=1}^M|\beta(i)|+s\\
&\leq sM.
 \end{align*}
\end{proof}

\begin{corollary}\label{coro homog}
Let $f\in\CC[x_1,\ldots,x_s]$ be a homogeneous polynomial of degree $d\geq s$ and let $n\geq 2$. Then $d$ is a lower bound for the degrees of the maximal minors of $\Jac_n(f)$. In other words, $d\geq s$ implies $dM-c\geq d$.
\end{corollary}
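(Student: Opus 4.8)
The plan is to read off the inequality directly from Lemma \ref{lemma:degree-homogeneous}. That lemma tells us $sM \geq c + s$, which rearranges to $s(M-1) \geq c$, i.e. $c \leq s(M-1)$. So the content of the corollary is to convert this bound on $c$ into the bound $dM - c \geq d$ on the degree of the minor, using the hypothesis $d \geq s$.

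First I would recall from the discussion preceding Lemma \ref{lemma:degree-homogeneous} that any maximal minor of $\Jac_n(f)$ is a homogeneous polynomial of degree $dM - c$, where $c = \sum_{i=1}^M |\alpha(i)| - \sum_{i=1}^M |\beta(i)|$ depends on the chosen submatrix. The target inequality $dM - c \geq d$ is equivalent to $d(M-1) \geq c$. Since $d \geq s$ and $M - 1 \geq 0$, I would chain
\begin{align*}
c &\leq s(M-1) \leq d(M-1),
\end{align*}
where the first inequality is Lemma \ref{lemma:degree-homogeneous} rewritten and the second uses $d \geq s$. Rearranging $c \leq d(M-1)$ gives $dM - c \geq d$, which is the claim. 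Because the bound on $c$ in Lemma \ref{lemma:degree-homogeneous} is uniform over the choice of rows and columns, the resulting lower bound $dM - c \geq d$ holds simultaneously for every maximal minor, so $d$ is indeed a lower bound for all the degrees.

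There is essentially no obstacle here: the corollary is a one-line arithmetic consequence of the lemma, and the only hypothesis being used beyond the lemma is $d \geq s$, which feeds into the single step $s(M-1) \leq d(M-1)$. The main thing to be careful about is the edge case $M = 1$ (which would force $c \leq 0$ and make the inequality trivial) and ensuring that the lemma's hypotheses $n \geq 2$, $s \geq 2$ are in force so that the combinatorial bound on $c$ applies; both are guaranteed by the standing assumptions carried over from the lemma.
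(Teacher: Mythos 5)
Your proof is correct and is essentially identical to the paper's: both rewrite Lemma \ref{lemma:degree-homogeneous} as $s(M-1)\geq c$ and combine it with $d\geq s$ to get $d(M-1)\geq c$, i.e.\ $dM-c\geq d$. No further comment is needed.
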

\begin{proof}
By Lemma \ref{lemma:degree-homogeneous}, $d(M-1)-c\geq s(M-1)-c\geq 0$.
\end{proof}

\subsection{Weighted homogeneous case}

In this subsection we work with weighted homogeneous polynomials assuming $s=2$ and $n \geq 3$. Recall Notation \ref{nota}. In this case we have $C_n=\{(n-i,i) \;|\; 0 \leq i \leq n\}$. Notice that $\Card(C_n)=n+1$ and $M=\frac{n(n+1)}{2}$.

As in the previous subsection, we want to give a lower bound for the weighted degrees of the maximal minors of $\Jac_n(f)$. Because we are working with an arbitrary weight, the strategy for finding such bound is not as straightforward as the ones of Lemma \ref{lemma:degree-homogeneous} and Corollary \ref{coro homog}.

\begin{lemma}\label{lemma:combinatorics}
Let $w=(w_1,w_2) \in \NN^2$. Then $\displaystyle\sum_{\alpha \in C_n}\alpha \cdot w =|w|M$.
\end{lemma}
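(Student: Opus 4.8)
The plan is to parametrize the finite set $C_n$ explicitly and reduce the claim to a pair of elementary index sums. Since $s=2$, every $\alpha\in C_n$ has the form $\alpha=(n-i,i)$ for a unique $i$ with $0\le i\le n$, so that $\alpha\cdot w=(n-i)w_1+iw_2$. I would substitute this into the sum and split it according to the two weights, obtaining $\sum_{\alpha\in C_n}\alpha\cdot w = w_1\sum_{i=0}^{n}(n-i) + w_2\sum_{i=0}^{n} i$.

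The key observation is a symmetry of $C_n$ under interchanging the two coordinates: the involution $i\mapsto n-i$ permutes the index set $\{0,\ldots,n\}$ and swaps the roles of the first and second coordinate. Consequently $\sum_{i=0}^{n}(n-i)=\sum_{i=0}^{n} i$, and both equal the triangular number $\frac{n(n+1)}{2}$. I would then invoke the identity $M=\frac{n(n+1)}{2}$ recorded at the opening of this subsection to rewrite each inner sum as $M$.

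Combining these, the total becomes $w_1 M + w_2 M = (w_1+w_2)M = |w|M$, which is exactly the assertion. There is no genuine obstacle here: once $C_n$ is written out the statement is a direct computation, and the only point demanding a moment's care is matching the closed form $\frac{n(n+1)}{2}$ of the triangular sum against the value of $M$ stated just before the lemma.
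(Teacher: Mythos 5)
Your proposal is correct and follows exactly the computation in the paper: parametrize $C_n$ as $\{(n-i,i)\mid 0\le i\le n\}$, split the sum into $w_1\sum_{i=0}^{n}(n-i)+w_2\sum_{i=0}^{n}i$, and identify each triangular sum with $M=\frac{n(n+1)}{2}$. No differences worth noting.
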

\begin{proof}
We have that
\begin{align*}
\sum_{\alpha \in C_n}\alpha \cdot w&= \sum_{i=0}^{n} (n-i,i) \cdot (w_1,w_2)\\ 
&=\sum_{i=0}^{n}(n-i)w_1+ \sum_{i=0}^{n}i w_2\\
&=\frac{n(n+1)}{2}w_1 + \frac{n(n+1)}{2}w_2 \\ 
&=|w|M.
\end{align*}
\end{proof}

Before going any further, we present an example that illustrates the notation and the key idea we develop to find the desired bound.

\begin{example}
Let $n=3$. Thus $M=6$. In this case,
\begin{align*}
B&=\{(0,0),(1,0),(0,1),(2,0),(1,1),(0,2)\},\\
B_1&=\{(1,0),(0,1),(2,0),(1,1),(0,2)\},\\
A_1&=\{(1,0),(0,1),(2,0),(1,1),(0,2),(3,0),(2,1),(1,2),(0,3)\},\\
C_3&=\{(3,0),(2,1),(1,2),(0,3)\}.
\end{align*}
Consider the submatrix of $\Jac_3(f)$ defined by the columns 
$$\mA=\{(1,0),(0,1),(2,0),(1,1),(3,0),(1,2)\}.$$ 
Let us show how we bound the sums $\sum_{\alpha\in\mA}\alpha\cdot w$ and $\sum_{\beta\in B}\beta\cdot w$.

Let $C=\mA\setminus C_3=\{(1,0),(0,1),(2,0),(1,1)\}$. Let $\theta=(1,0)$ and $\tau =(2,1)$. Notice that $\theta\in C$, $\tau\in C_3\setminus\mA$, and $\theta \cdot w < \tau \cdot w$. This implies

\begin{align*}
\sum_{\alpha\in\mA}\alpha\cdot w&=\sum_{\alpha\in C}\alpha\cdot w+\sum_{\alpha\in \mA\cap C_3}\alpha\cdot w \notag\\
&=\sum_{\alpha\in C\setminus\{\theta\}}\alpha\cdot w+\theta\cdot w+\sum_{\alpha\in \mA\cap C_3}\alpha\cdot w \notag\\
&<\sum_{\alpha\in C\setminus\{\theta\}}\alpha\cdot w+\tau\cdot w+\sum_{\alpha\in \mA\cap C_3}\alpha\cdot w \notag\\
&<\sum_{\alpha\in C\setminus\{\theta\}}\alpha\cdot w+\sum_{\alpha\in C_3}\alpha\cdot w\notag\\
&=\sum_{\alpha\in C\setminus\{\theta\}}\alpha\cdot w+|w|M.
\end{align*}
    
On the other hand, we also have
\begin{align*}
\sum_{\beta\in B}\beta\cdot w=\sum_{\beta\in B_1}\beta\cdot w&=\Big(\sum_{\substack{\beta\in B_1\\\beta\neq (1,0),(0,2)}}
\beta\cdot w\Big)+(1,0)\cdot w+(0,2)\cdot w\\    
&=\Big(\sum_{\alpha\in C\setminus\{\theta\}}\alpha\cdot w\Big)+(1,0)\cdot w+(0,2)\cdot w\\
&>\sum_{\alpha\in C\setminus\{\theta\}}\alpha\cdot w+w_1+w_2.
\end{align*}
\end{example}

The following lemma generalizes the process of the previous example.

\begin{lemma} \label{lemma:general-sets}
Let $w=(w_1,w_2) \in \NN^2$. Assume $w_1\geq w_2>0$. Let $\mathfrak{A}=\{\alpha(1),\ldots,\alpha(M)\} \subset A_1$, $B=\{\beta(1),\ldots,\beta(M)\}$, 
 and $C=\mathfrak{A} \setminus C_n$. Define subsets $\mathcal{A}\subset C$ satisfying the following conditions:
\begin{enumerate}
\item $\mathcal{A}=C$, $\Card(B_1 \setminus \mathcal{A})\geq 2$ and there exists $\theta\in B_1\setminus \mathcal{A}$ such that $\theta_1\geq1$.
\item $\mathcal{A}=C\setminus\{\theta\}$,  where $\theta\in C$ is such that $\theta_1\geq1$, $\Card(B_1 \setminus \mathcal{A})\geq 2$, and there exists $\tau\in C_n\setminus \mathfrak{A}$ such that $\theta\cdot w\leq \tau\cdot w$.
\item $\mathcal{A}=C\setminus\{\theta,\theta'\}$, where $\theta,\theta'\in C$ are such that $\theta_1\geq1$ and there exist $\tau,\tau'\in C_n\setminus \mathfrak{A} $ such that $\theta\cdot w\leq \tau\cdot w$ and $\theta'\cdot w\leq \tau'\cdot w$.
\end{enumerate}
Then $|w|M\geq c+|w|$, where $c =\sum_{i=1}^{M}(\alpha(i)-\beta(i))\cdot w$. 
\end{lemma}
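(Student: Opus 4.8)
The plan is to reduce the statement to the single inequality $c \leq |w|(M-1)$, which is equivalent to the desired $|w|M \geq c + |w|$. Since $B=\{\beta(1),\ldots,\beta(M)\}$ runs over the full set $\Lsc$ of row indices (it has $M$ elements) and the index $(0,0)$ contributes nothing to the weighted sum, I would first rewrite $c = \sum_{\alpha \in \mathfrak{A}} \alpha\cdot w - \sum_{\beta\in B_1}\beta\cdot w$. The strategy is then to bound the first sum from above and the second from below by expressions sharing the common term $\sum_{\alpha\in\mathcal{A}}\alpha\cdot w$, arranging the two estimates so that this common term cancels and leaves exactly $|w|M - |w|$.

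For the upper bound I would use the disjoint decomposition $\mathfrak{A} = C \sqcup (\mathfrak{A}\cap C_n)$ coming from $A_1 = B_1 \sqcup C_n$ (so that $C = \mathfrak{A}\setminus C_n \subseteq B_1$). In each of the three cases I split off from $\sum_{\alpha\in C}\alpha\cdot w$ the one or two distinguished exponents $\theta$ (and $\theta'$) and replace each by the corresponding $\tau$ (resp.\ $\tau'$) using the hypotheses $\theta\cdot w\leq \tau\cdot w$ and $\theta'\cdot w\leq \tau'\cdot w$; in case (1) there is nothing to replace. Because the chosen $\tau$'s are distinct elements of $C_n\setminus\mathfrak{A}$, they are disjoint from $\mathfrak{A}\cap C_n$, so their contribution together with $\sum_{\alpha\in\mathfrak{A}\cap C_n}\alpha\cdot w$ is a sum over a subset of $C_n$, hence at most $\sum_{\alpha\in C_n}\alpha\cdot w = |w|M$ by Lemma \ref{lemma:combinatorics}. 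This yields, uniformly in the three cases, the estimate $\sum_{\alpha\in\mathfrak{A}}\alpha\cdot w \leq \sum_{\alpha\in\mathcal{A}}\alpha\cdot w + |w|M$.

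For the lower bound I would use $\mathcal{A}\subseteq C\subseteq B_1$ to write $\sum_{\beta\in B_1}\beta\cdot w = \sum_{\alpha\in\mathcal{A}}\alpha\cdot w + \sum_{\beta\in B_1\setminus\mathcal{A}}\beta\cdot w$, and then prove $\sum_{\beta\in B_1\setminus\mathcal{A}}\beta\cdot w \geq |w|$. This is the heart of the matter and the step I expect to be the main obstacle, since it is precisely what the bookkeeping hypotheses in (1)--(3) are designed to supply: in every case $B_1\setminus\mathcal{A}$ contains a distinguished exponent whose first coordinate is at least $1$, contributing at least $w_1$, together with a second, distinct exponent of positive total degree, contributing at least $w_2$ (here $w_1\geq w_2>0$ forces $\gamma\cdot w\geq w_2$ for every nonzero $\gamma\in\NN^2$). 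Their sum is then at least $w_1+w_2 = |w|$, and all remaining terms are nonnegative.

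Combining the two estimates, the common term $\sum_{\alpha\in\mathcal{A}}\alpha\cdot w = \sum_{\beta\in\mathcal{A}}\beta\cdot w$ cancels and gives $c \leq \bigl(\sum_{\alpha\in\mathcal{A}}\alpha\cdot w + |w|M\bigr) - \bigl(\sum_{\alpha\in\mathcal{A}}\alpha\cdot w + |w|\bigr) = |w|M - |w|$, which is the claim. The only points needing care beyond this are verifying in each case that the prescribed $\theta,\theta',\tau,\tau'$ are genuinely distinct and correctly located (so the replacement stays inside $C_n$ and the two pulled-out row indices remain distinct), and observing that in case (3) the inclusion $\{\theta,\theta'\}\subseteq B_1\setminus\mathcal{A}$ already forces $\Card(B_1\setminus\mathcal{A})\geq 2$, matching the explicit hypothesis imposed in cases (1) and (2).
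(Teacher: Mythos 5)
Your proposal is correct and follows essentially the same route as the paper: the same decomposition $\mathfrak{A}=C\sqcup(\mathfrak{A}\cap C_n)$ with the $\theta\mapsto\tau$ replacement and Lemma \ref{lemma:combinatorics} giving $\sum_{\alpha\in\mathfrak{A}}\alpha\cdot w\leq\sum_{\alpha\in\mathcal{A}}\alpha\cdot w+|w|M$, and the same lower bound $\sum_{\beta\in B_1}\beta\cdot w\geq\sum_{\alpha\in\mathcal{A}}\alpha\cdot w+|w|$ from the two distinguished elements of $B_1\setminus\mathcal{A}$. Your explicit flagging of the distinctness of $\tau,\tau'$ in case (3) is a point the paper glosses over, but otherwise the arguments coincide.
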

\begin{proof}
In each of the cases we have:
\begin{align*}
\sum_{i=1}^M\alpha(i)\cdot w&=\sum_{\alpha\in C}\alpha\cdot w+\sum_{\alpha\in \mathfrak{A}\cap C_n}\alpha\cdot w \notag\\
&=\sum_{\alpha\in\mathcal{A}}\alpha\cdot w+\sum_{\alpha\in C\setminus\mathcal{A}}\alpha\cdot w+\sum_{\alpha\in \mathfrak{A}\cap C_n}\alpha\cdot w \notag\\
&\leq \sum_{\alpha\in\mathcal{A}}\alpha\cdot w+\sum_{\alpha\in C_n}\alpha\cdot w\\
&=\sum_{\alpha\in\mathcal{A}}\alpha\cdot w+|w|M,
\end{align*}
where the last equality follows by Lemma \ref{lemma:combinatorics}.

On the other hand, assuming that $\beta(1)=(0,0)$, since $\Card(B_1\setminus \mathcal{A})\geq2$ we also have:
$$\sum_{i=1}^M\beta(i)\cdot w=\sum_{i=2}^M\beta(i)\cdot w\geq\sum_{\alpha\in\mathcal{A}}\alpha\cdot w+(w_1+w_2).$$
Finally,
\begin{align*}
c&=\sum_{i=1}^M\alpha(i)\cdot w-\sum_{i=1}^M\beta(i)\cdot w\notag\\
&\leq |w|M+\sum_{\alpha\in\mathcal{A}}\alpha\cdot w-\sum_{i=1}^M\beta(i)\cdot w\notag\\
&\leq |w|M+\sum_{\alpha\in\mathcal{A}}\alpha\cdot w-\sum_{\alpha\in\mathcal{A}}\alpha\cdot w-(w_1+w_2)\\
&=|w|M-|w|.\notag
\end{align*}
\end{proof}

\begin{remark}\label{remark:degree}
 Let $f$ be a weighted homogeneous polynomial in $\CC[x_1,x_2]$ of weight $w \in \NN^2$ and degree $d$. Without loss of generality we can assume that $w_1\geq w_2>0$. Let $n \geq 3$ and
 $g$ be the determinant of the submatrix of $\Jac_n(f)$ formed by the  columns indexed by $ \mathfrak{A}=\{\alpha(1),\ldots,\alpha(M)\}\subset A_1$ and the rows indexed by $B=\{\beta(1),\ldots,\beta(M)\}$. From Theorem \ref{theorem:Tjurina-graded}, $g$ is a weighted homogeneous polynomial of weight $w$ and degree $dM-c$, where
 \begin{align*}
c &=\sum_{i=1}^{M}(\alpha(i)-\beta(i))\cdot w \\
&=\sum_{i=1}^{M}\alpha(i)\cdot w-\sum_{i=1}^{M}\beta(i)\cdot w.
 \end{align*}
 We take $C=\mathfrak{A}\setminus C_n \subset B_1$. We note that $\mathfrak{A}\cap C_n \not= \emptyset$ since $\Card(B_1)=M-1$. Hence $M-(n+1) \leq \Card(C)\leq M-1$. Furthermore, denote $I_j=\{(0,1),(0,2),\ldots,(0,j)\}$ for $1 \leq j \leq n$, and $C_i=\{\beta\in\NN^2||\beta|=i\}$, for $0\leq i\leq n$.
 \end{remark}

\begin{proposition}\label{lemma:special-case}
 Consider the assumptions of Remark \ref{remark:degree}.  If $\Card(C)=M-n$, then the following cases hold.
 \begin{enumerate}
 \item If $\mathfrak{A}=(B_1 \cup C_n) \setminus I_n$, then $c=w_1M$.
 \item If $\mathfrak{A}\not=(B_1 \cup C_n) \setminus I_n$, then $|w|M\geq c +|w|$. 
 \end{enumerate}
 \end{proposition}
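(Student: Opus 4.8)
The plan is to reduce the quantity $c$ to a sum over two small index sets by exploiting a cancellation, and then to estimate that sum using $w_1\geq w_2>0$. First I would unpack the hypothesis $\Card(C)=M-n$. Since $\mathfrak{A}$ is the disjoint union of $C$ and $\mathfrak{A}\cap C_n$ and $\Card(\mathfrak{A})=M$, the assumption forces $\Card(\mathfrak{A}\cap C_n)=n$; as $\Card(C_n)=n+1$, exactly one vector $\mu\in C_n$ fails to lie in $\mathfrak{A}$. Likewise, because $C\subseteq B_1$ (noted in Remark \ref{remark:degree}) and $\Card(B_1)=M-1$, we obtain $\Card(B_1\setminus C)=n-1$.

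Next I would compute $c$. Writing $c=\sum_{\alpha\in\mathfrak{A}}\alpha\cdot w-\sum_{\beta\in B_1}\beta\cdot w$ (the term $(0,0)$ contributes nothing) and splitting each sum along the common part indexed by $C\subseteq B_1$, the $C$-contributions cancel, leaving
\[
c=\sum_{\alpha\in\mathfrak{A}\cap C_n}\alpha\cdot w-\sum_{\beta\in B_1\setminus C}\beta\cdot w.
\]
Applying Lemma \ref{lemma:combinatorics} to $\sum_{\alpha\in\mathfrak{A}\cap C_n}\alpha\cdot w=\sum_{\alpha\in C_n}\alpha\cdot w-\mu\cdot w=|w|M-\mu\cdot w$ yields the identity
\[
|w|M-c=\mu\cdot w+\sum_{\beta\in B_1\setminus C}\beta\cdot w,
\]
which expresses the target quantity entirely in terms of the single missing vector $\mu$ and the $(n-1)$-element set $B_1\setminus C$.

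I would then treat the two cases through this identity. Since $A_1=B_1\sqcup C_n$, the configuration $\mathfrak{A}=(B_1\cup C_n)\setminus I_n$ is equivalent to $\mu=(0,n)$ (the only element of $C_n\cap I_n$) together with $B_1\setminus C=I_{n-1}=\{(0,1),\ldots,(0,n-1)\}$ (the only elements of $B_1\cap I_n$). Under these identifications a direct computation, using $\mu\cdot w=nw_2$ and $\sum_{j=1}^{n-1}jw_2=\tfrac{n(n-1)}{2}w_2$, gives $|w|M-c=nw_2+\tfrac{n(n-1)}{2}w_2=\tfrac{n(n+1)}{2}w_2=w_2M$, hence $c=w_1M$, which is case (1).

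For case (2), the failure of the above characterization means that not both $\mu=(0,n)$ and $B_1\setminus C=I_{n-1}$ hold; since $I_{n-1}$ consists precisely of the vectors of $B_1$ with vanishing first coordinate, at least one vector $v^*$ in the multiset $\{\mu\}\cup(B_1\setminus C)$ has $v^*_1\geq 1$, so $v^*\cdot w\geq w_1$. Every vector $v$ in that multiset satisfies $|v|\geq 1$ and hence $v\cdot w\geq |v|w_2\geq w_2$ by $w_1\geq w_2$. As the multiset has $n\geq 3$ elements, bounding $v^*$ by $w_1$ and the remaining $n-1$ entries by $w_2$ each gives $|w|M-c\geq w_1+(n-1)w_2\geq w_1+w_2=|w|$, i.e.\ $|w|M\geq c+|w|$. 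The delicate point is the cancellation of the $C$-terms, which pins $c$ down to $\mu$ and $B_1\setminus C$, together with the observation that $(B_1\cup C_n)\setminus I_n$ is exactly the unique configuration making every relevant first coordinate vanish and thereby forfeiting a $w_1$ contribution.
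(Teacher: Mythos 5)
Your proof is correct, and it takes a genuinely different route from the paper's. The paper proves (1) by computing the full sums $\sum_i\alpha(i)$ and $\sum_i\beta(i)$ level by level over $C_0,\ldots,C_n$, and proves (2) by a sub-case analysis ($C=B_1\setminus I_{n-1}$ or not) that feeds into Lemma \ref{lemma:general-sets}(1) and (2). You instead exploit the cancellation of the common part $C\subseteq B_1$ to obtain the exact identity $|w|M-c=\mu\cdot w+\sum_{\beta\in B_1\setminus C}\beta\cdot w$, where $\mu$ is the unique element of $C_n\setminus\mathfrak{A}$; this single identity handles both cases at once, and your observation that $\mathfrak{A}=(B_1\cup C_n)\setminus I_n$ is equivalent to every vector of $\{\mu\}\cup(B_1\setminus C)$ having vanishing first coordinate (since $I_n$ is exactly the set of such vectors in $A_1$, and the cardinalities match) is exactly the right dichotomy: either all $n$ exceptional vectors contribute only $w_2$-multiples, forcing $c=w_1M$, or at least one contributes $\geq w_1$ and the rest $\geq w_2$, giving $|w|M-c\geq w_1+(n-1)w_2\geq|w|$. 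What your approach buys is a self-contained and sharper estimate that bypasses Lemma \ref{lemma:general-sets} entirely for this proposition; what the paper's approach buys is uniformity, since Lemma \ref{lemma:general-sets} is the common engine for both this proposition and Proposition \ref{lemma:degree}, where the set $C_n\setminus\mathfrak{A}$ is larger and your single-missing-vector identity no longer applies verbatim.
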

 \begin{proof}
In order to show (1), we first notice
\begin{align*}
\sum_{i=1}^{M}\beta(i)&= \sum_{i=0}^{n-1} 
\left( \sum_{\beta \in C_i}\beta \right) \\
&=\sum_{i=0}^{n-1}\left(\frac{i(i+1)}{2},\frac{i(i+1)}{2}\right)\\
&=\sum_{i=1}^{n}\left(\frac{(i-1)i}{2},\frac{(i-1)i}{2}\right).
\end{align*}

Besides, we note that
\begin{align*}
\sum_{i=1}^{M}\alpha(i)&= \sum_{i=1}^{n} 
\left( \sum_{\substack{\alpha \in C_i \\ \alpha \not= (0,i)}}\alpha \right) \\
&=\sum_{i=1}^{n}\left(\frac{i(i+1)}{2},\frac{i(i-1)}{2}\right).
\end{align*}

Therefore, $c =\sum_{i=1}^{M}(\alpha(i)-\beta(i))\cdot w=\sum_{i=1}^{n}(i,0)\cdot w=w_1M$.

For (2) we consider two cases:

\begin{itemize}
\item $C \not=B_1\setminus I_{n-1}$. In this case, there exists $\theta \in B_1 \setminus C$ such that $\theta_1\geq 1$. Set $\mathcal{A}=C$ and so $\Card(B_1\setminus\mathcal{A})\geq2$. Then $|w|M \geq c+|w|$ by Lemma \ref{lemma:general-sets} (1). 
\item $C =B_1\setminus I_{n-1}$. In this case $\theta=(1,0)\in C$, and $C_n \setminus \mathfrak{A}$ has a unique element $\tau\not=(0,n)$. Then, $\theta \cdot w \leq \tau \cdot w$. Set $\mathcal{A}=C\setminus \{\theta\}$ and so $\Card(B_1\setminus\mathcal{A})\geq2$. Then $|w|M \geq c+|w|$ by Lemma \ref{lemma:general-sets} (2).
\end{itemize}
 \end{proof}

\begin{proposition}\label{lemma:degree}
Consider the assumptions of Remark \ref{remark:degree}. If $\Card(C)\not=M-n$, then $|w|M \geq c+|w|$. 
\end{proposition}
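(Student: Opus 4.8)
The plan is to argue exactly as in Proposition~\ref{lemma:special-case}: it suffices to produce a subset $\mathcal{A}\subseteq C$ that verifies the hypotheses of one of the three cases of Lemma~\ref{lemma:general-sets}, after which the bound $|w|M\geq c+|w|$ follows at once. Throughout I set $k=\Card(\mathfrak{A}\cap C_n)$, so that $\Card(C)=M-k$ and the hypothesis $\Card(C)\neq M-n$ reads $k\neq n$. Since $\Card(B_1)=M-1<M=\Card(\mathfrak{A})$ forces $\mathfrak{A}\cap C_n\neq\emptyset$ and $\Card(C_n)=n+1$, we have $1\leq k\leq n+1$, and I will constantly use the two identities $\Card(C_n\setminus\mathfrak{A})=n+1-k$ and $\Card(B_1\setminus C)=k-1$.

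The combinatorial engine is a domination estimate. Writing $C_n=\{(n-i,i)\mid 0\leq i\leq n\}$ and using $w_1\geq w_2>0$, every $\tau=(n-i,i)$ with $i\leq n-1$ satisfies $\tau\cdot w\geq w_1$, and in fact $\tau\cdot w\geq w_1+w_2$; hence such a $\tau$ serves as a dominating partner ($\tau\cdot w\geq\theta\cdot w$) for $\theta=(1,0)$ and for $\theta=(1,1)$, both of which lie in $B_1$ when $n\geq 3$. Likewise every $\tau\in C_n$ dominates $(0,1)$, since $\tau\cdot w\geq nw_2\geq w_2$. The point where the excluded value $k=n$ matters is this: whenever $k\leq n-1$ one has $\Card(C_n\setminus\mathfrak{A})\geq 2$, so $C_n\setminus\mathfrak{A}$ must contain an element different from $(0,n)$, and that element is an available dominator for the low-degree vectors we will move out of $C$.

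With these tools in hand I would split on $k$. If $k=n+1$ then $C_n\subseteq\mathfrak{A}$, $\Card(B_1\setminus C)=n\geq 3$, and since only the $n-1$ vectors $(0,1),\ldots,(0,n-1)$ of $B_1$ have vanishing first coordinate, some $\theta\in B_1\setminus C$ has $\theta_1\geq 1$; thus $\mathcal{A}=C$ fits case~(1), with no domination required. If $3\leq k\leq n-1$ and $B_1\setminus C$ contains a vector of first coordinate $\geq 1$, then again $\mathcal{A}=C$ fits case~(1) because $\Card(B_1\setminus C)=k-1\geq 2$; otherwise $B_1\setminus C\subseteq I_{n-1}$, so $(1,0)\in C$, and deleting $\theta=(1,0)$ --- dominated by any element of $C_n\setminus\mathfrak{A}$ other than $(0,n)$ --- realizes case~(2). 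If $k=2$ then $B_1\setminus C$ is a single vector, and I extract from $C$ one vector of first coordinate $\geq 1$ that is dominated inside $C_n\setminus\mathfrak{A}$, namely $(1,0)$ when $(1,0)\in C$ and $(1,1)$ otherwise, giving case~(2) with $\Card(B_1\setminus\mathcal{A})=2$. Finally, if $k=1$ then $C=B_1$, and removing the pair $(1,0),(0,1)$ and pairing them with two distinct elements of $C_n\setminus\mathfrak{A}$ --- one chosen $\neq(0,n)$ to dominate $(1,0)$, which is possible since $\Card(C_n\setminus\mathfrak{A})=n\geq 3$ --- realizes case~(3).

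The step I expect to be the main obstacle is the domination bookkeeping for cases~(2) and~(3): one must guarantee simultaneously that the one or two vectors deleted from $C$ genuinely lie in $C$, that they supply a first coordinate $\geq 1$ for the row estimate $\sum_{\beta\in B_1\setminus\mathcal{A}}\beta\cdot w\geq w_1+w_2$, and that they admit \emph{distinct} dominating partners inside $C_n\setminus\mathfrak{A}$. The tight regime is small $k$ (in particular $k=2$, where only $n-1$ high-degree vectors are spare and $B_1\setminus C$ contributes just one vector); the freedom to delete either $(1,0)$ or $(1,1)$, each dominated by \emph{every} element of $C_n\setminus\{(0,n)\}$, is exactly what keeps this case within reach.
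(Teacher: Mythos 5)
Your proposal is correct and follows essentially the same route as the paper: a case analysis on $k=\Card(\mathfrak{A}\cap C_n)$ (equivalently on $\Card(C)=M-k$), reducing each case to one of the three configurations of Lemma~\ref{lemma:general-sets} exactly as the paper does. The only (cosmetic) difference is in the case $k=2$ with $(1,0)\notin C$, where you delete $(1,1)$ uniformly using the observation that every $\tau\in C_n\setminus\{(0,n)\}$ satisfies $\tau\cdot w\geq w_1+w_2$, whereas the paper splits further according to the shape of the available $\tau$ and chooses $(1,1)$ or $(2,0)$; your version is slightly cleaner but not a different argument.
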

\begin{proof} 
We proceed by analyzing all possible cases we can have. Assume $B_1=B\setminus\{\beta(1)\}$.

\begin{enumerate}
\item $\text{Card}(C)=M-1$. In this case we have $C=B_1$. We have that $\theta=(1,0),\theta'=(0,1)\in C$. Since $\Card(C_n\setminus\mathfrak{A})=n\geq3$, there exist two vectors $\tau,\tau'\in C_n\setminus\mathfrak{A}$, 
$\tau\neq\tau'$ such that $\tau_1,\tau'_2 \geq 1$, so that $\theta\cdot w\leq \tau \cdot w$ and $\theta'\cdot w\leq\tau' \cdot w$. In this case we take $\mathcal{A}=C\setminus\{\theta,\theta'\}$ and the result follows by Lemma \ref{lemma:general-sets}(3).

\item $\text{Card}(C)=M-2$. In this case $C=\mathfrak{A}\setminus C_n=B_1\setminus\{\beta(j)\}$ for some $j\geq2$. We have two subcases:

\begin{enumerate}
\item $\theta=(1,0) \in C$. In this case, since $\Card(C_n\setminus\mathfrak{A})=n-1\geq 2$, there exists $\tau \in C_n\setminus \mathfrak{A}$, such that $\tau_1 \geq 1$ and $\theta \cdot w \leq \tau\cdot w$. We can take $\mathcal{A}=C\setminus\{\theta\}$ and so $\Card(B_1\setminus\mathcal{A})\geq2$. The result follows by Lemma \ref{lemma:general-sets}(2).
\item $(1,0) \not \in C$. We have that $\{(2,0),(1,1)\} \subset C$. As $\Card(C_n\setminus\mathfrak{A})=n-1\geq 2$, there exists $\tau\in C_n\setminus\mathfrak{A}$ such that $\tau_1,\tau_2 \geq 1$ or $\tau=(n,0)$. If $\tau_1,\tau_2 \geq 1$ let $\theta=(1,1)$. If $\tau=(n,0)$ let $\theta=(2,0)$. In any of the cases $\theta  \in C$ and $\theta\cdot w\leq\tau\cdot w$. We can take $\mathcal{A}=C\setminus\{\theta\}$ and so $\Card(B_1\setminus\mathcal{A})\geq2$. The result follows by Lemma \ref{lemma:general-sets}(2). 
\end{enumerate}

\item$\text{Card}(C)=M-j$ for $3\leq j\leq n-1$. We have two subcases:

\begin{enumerate}
\item$C=B_1\setminus\{(0,b_1),...,(0,b_{j-1})\}$. Then $\theta=(1,0)\in C$ and $\Card(C_n\setminus\mathfrak{A})=n+1-j\geq 2$. Then there exists $\tau \in C_n\setminus\mathfrak{A}$ such that $\theta \cdot w\leq\tau \cdot w$. We can take $\mathcal{A}=C\setminus\{\theta\}$ and so $\Card(B_1\setminus\mathcal{A})\geq2$. The result follows by Lemma \ref{lemma:general-sets}(2). 
\item $C=B_1\setminus\{(a_1,b_1),...,(a_{j-1},b_{j-1})\}$ with at least one subindex $i$, $1\leq i\leq j-1$ such that $\theta=(a_i,b_i)$ satisfies that $a_i\geq 1$. In this case, $\theta \in B_1 \setminus C$. We can take $\mathcal{A}=C$ and so $\Card(B_1\setminus\mathcal{A})\geq2$. The result follows by Lemma \ref{lemma:general-sets}(1).
\end{enumerate}

\item $\Card(C)=M-(n+1)$. In this situation there exists $\theta \in B_1\setminus C$ with $\theta_1 \geq 1$. In this case we can take $\mathcal {A}=C$ and so $\Card(B_1\setminus\mathcal{A})\geq2$. The result follows by Lemma \ref{lemma:general-sets}(1).

\end{enumerate}

\end{proof}

\begin{corollary}\label{coro:s=2}
Let $f\in\CC[x_1,x_2]$ be a weighted homogeneous polynomial of weight $w\in\NN^2$ and degree $d$. Suppose that $d\geq 2w_1\geq 2w_2>0$. Let $n\geq 3$ and $g$ be a maximal minor of $\Jac_n(f)$. Then $g$ is a weighted homogeneous polynomial of degree greater or equal than $d$.
\end{corollary}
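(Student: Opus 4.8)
The plan is to read off the degree of $g$ from the earlier results and reduce the statement to a numerical inequality on the quantity $c$. By Theorem \ref{theorem:Tjurina-graded} (see also Remark \ref{remark:degree}), once we fix the columns $\mathfrak{A}=\{\alpha(1),\ldots,\alpha(M)\}\subset A_1$ and the rows $B=\{\beta(1),\ldots,\beta(M)\}$ that define $g$, the minor $g$ is weighted homogeneous of degree $dM-c$, where $c=\sum_{i=1}^{M}(\alpha(i)-\beta(i))\cdot w$. Since $dM-c\geq d$ is equivalent to $c\leq d(M-1)$, the whole problem becomes a comparison between the upper bounds on $c$ supplied by Propositions \ref{lemma:special-case} and \ref{lemma:degree} and the number $d(M-1)$. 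As in Remark \ref{remark:degree}, I may assume without loss of generality that $w_1\geq w_2>0$.

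The first observation is that in every configuration of $(\mathfrak{A},B)$ except one, Propositions \ref{lemma:special-case}(2) and \ref{lemma:degree} together yield $|w|M\geq c+|w|$, i.e. $c\leq |w|(M-1)$, where $|w|=w_1+w_2$. The hypothesis then enters cleanly: since $w_1\geq w_2$ we have $|w|=w_1+w_2\leq 2w_1\leq d$, so $c\leq |w|(M-1)\leq d(M-1)$, and hence $\deg g = dM-c\geq d$. Thus the generic bound on $c$, combined with the chain $d\geq 2w_1\geq |w|$, settles all but one case. The point worth noting is that it is precisely the partition by $\operatorname{Card}(C)$ from Remark \ref{remark:degree} that guarantees this coverage: Proposition \ref{lemma:degree} handles $\operatorname{Card}(C)\neq M-n$, while Proposition \ref{lemma:special-case}(2) handles $\operatorname{Card}(C)=M-n$ with $\mathfrak{A}\neq (B_1\cup C_n)\setminus I_n$.

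The single remaining configuration is exactly case (1) of Proposition \ref{lemma:special-case}, where $\mathfrak{A}=(B_1\cup C_n)\setminus I_n$ and $c=w_1M$. Here $c$ can exceed the generic bound $|w|(M-1)$, so this is the delicate case, and it is where the factor of two in $d\geq 2w_1$ is genuinely used rather than the weaker $d\geq w_1$. In this case $\deg g = dM-w_1M=(d-w_1)M$, and the desired inequality $(d-w_1)M\geq d$ is equivalent to $d(M-1)\geq w_1M$. Using $d\geq 2w_1$ we get $d(M-1)\geq 2w_1(M-1)$, so it suffices to verify $2w_1(M-1)\geq w_1M$; since $w_1>0$ this reduces to $2(M-1)\geq M$, i.e. $M\geq 2$, which holds because for $s=2$ and $n\geq 3$ we have $M=\tfrac{n(n+1)}{2}\geq 6$ (Remark \ref{card}). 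This closes the argument.

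I expect the main obstacle to be conceptual rather than computational: correctly recognizing that case (1) of Proposition \ref{lemma:special-case} is the unique configuration where the generic bound $c\leq |w|(M-1)$ fails, and then checking that the stronger hypothesis $d\geq 2w_1$ is exactly strong enough to absorb the larger value $c=w_1M$ there. Everything else is bookkeeping, namely rewriting ``$\deg g\geq d$'' as ``$c\leq d(M-1)$'' and feeding in the two propositions.
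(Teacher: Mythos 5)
Your proposal is correct and follows essentially the same route as the paper: the paper's proof likewise reads off $\deg g = dM-c$ from Theorem \ref{theorem:Tjurina-graded} and then deduces $d(M-1)\geq 2w_1(M-1)\geq c$ from Propositions \ref{lemma:special-case} and \ref{lemma:degree}. The only difference is one of exposition: you make explicit the case split between the generic bound $c\leq |w|(M-1)$ and the exceptional configuration with $c=w_1M$ (where $M\geq 2$ is needed), which the paper compresses into a single line.
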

\begin{proof}
By Theorem \ref{theorem:Tjurina-graded}, $g$ is a weighted homogeneous polynomial of degree $dM-c$. 
Propositions \ref{lemma:special-case} and \ref{lemma:degree} imply $d(M-1)\geq 2w_1(M-1)\geq c.$
\end{proof}

\section{Derivations of the higher Nash blowup local algebra}

In this section we study the Lie algebra of derivations of the higher Nash blowup local algebra of a hypersurface defining an isolated singularity.

\begin{definition}\label{Tjurina}\cite[Definition 1.3]{HMYZ2023}.
Let $f\in\conv$. Denote
$$\cT_n(f)=\conv/\langle f,\cJ_n(f) \rangle.$$
$\cT_n(f)$ is called the higher Nash blowup local algebra of $f$. For $n=1$ this is also known as the Tjurina algebra of $f$. Moreover, denote as $\cL_n(f)=\Der(\cT_n(f))$, i.e., the Lie algebra of derivations of $\cT_n(f)$.
\end{definition}

\begin{remark}
The higher Nash blowup local algebra is denoted as $\mathcal{M}_n$ in \cite{HMYZ2023}. In this paper we change the notation in order to be consistent with the usual notation for the Tjurina algebra. In addition, it is also customary to use $\mathcal{M}_1$ (or, more precisely, $\mathcal{M}$) for the Milnor algebra.
\end{remark}

Let $f$ be a weighted homogeneous polynomial defining an isolated hypersurface singularity. Theorem \ref{theorem:Tjurina-graded} implies that $\pT$ is graded. There is an induced grading on $\Der(\pT)$, as we explain next.

\begin{remark}
Suppose that the hypersurface defined by $f$ has an isolated singularity. Then $\pT$ is Artinian. This follows from the fact that the zero locus of $\langle f,\cJ_n(f)\rangle$ coincides with the singular locus of the hypersurface \cite[Corollary 2.2]{Duarte2017}.
\end{remark}

\begin{lemma}[{\cite[Lemma 2.1]{XY1996}}] \label{der graded}
Let $A=\bigoplus_{i=0}^tA_i$ be a graded commutative Artinian local algebra. Let $L(A)$ be the derivation algebra of $A$. Then $L(A)$ can be graded as follows: $L(A)=\bigoplus_{k=-t}^tL_k$, where $L_k=\{D\in L(A)|D(A_i)\subset A_{i+k} \mbox{ for all }i\}$.
\end{lemma}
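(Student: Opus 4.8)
The plan is to decompose an arbitrary derivation into homogeneous pieces indexed by the degree shift they induce, and then to verify that each such piece is itself a derivation. First I would define, for a derivation $D \in L(A)$ and an integer $k$, a map $D_k$ as follows: for a homogeneous element $a \in A_i$, write $D(a) = \sum_j (D(a))_j$ according to the decomposition $A = \bigoplus_j A_j$, and set $D_k(a) := (D(a))_{i+k} \in A_{i+k}$; then extend $D_k$ linearly to all of $A$ using the direct-sum decomposition. By construction $D_k(A_i) \subseteq A_{i+k}$ for every $i$, so that $D_k$ will belong to $L_k$ once I know it is a derivation.

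The key step is verifying that each $D_k$ satisfies the Leibniz rule. Given homogeneous $a \in A_i$ and $b \in A_j$, the product $ab$ lies in $A_{i+j}$, and applying the projection onto $A_{i+j+k}$ to the identity $D(ab) = D(a)b + aD(b)$ isolates precisely $D_k(ab)$ on the left. On the right, since multiplication by $b \in A_j$ raises degree by $j$, the only contribution to degree $i+j+k$ coming from $D(a)b$ is $(D(a))_{i+k}\, b = D_k(a)\, b$, and symmetrically the only contribution from $aD(b)$ is $a\, D_k(b)$. Hence $D_k(ab) = D_k(a) b + a D_k(b)$ on homogeneous elements, and bilinearity extends this to all of $A$. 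This degree-tracking computation is the main, and essentially the only, nontrivial point of the argument; everything else is bookkeeping with the grading.

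It then remains to assemble the grading. For homogeneous $a \in A_i$ we have $D(a) = \sum_j (D(a))_j = \sum_k (D(a))_{i+k} = \sum_k D_k(a)$, so $D = \sum_k D_k$, a finite sum because $A$ is concentrated in degrees $0$ through $t$. The range of $k$ is constrained because $D_k$ sends $A_i$ into $A_{i+k}$ while $A_m = 0$ unless $0 \leq m \leq t$: nonvanishing on $A_0$ forces $k \leq t$ and nonvanishing on $A_t$ forces $k \geq -t$, so $D_k = 0$ whenever $|k| > t$. The sum is direct since, if $\sum_k D_k = 0$, then for each $a \in A_i$ the elements $D_k(a) \in A_{i+k}$ lie in distinct graded components and must each vanish, whence each $D_k = 0$. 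This yields $L(A) = \bigoplus_{k=-t}^{t} L_k$, and the compatibility $[L_k, L_{k'}] \subseteq L_{k+k'}$ making this a Lie-algebra grading follows at once from the fact that the composite $D_k \circ D_{k'}$ shifts degree by $k+k'$.
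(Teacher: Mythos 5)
The paper does not prove this lemma itself --- it quotes it from Xu--Yau \cite{XY1996} --- but your argument is the standard proof and is correct: the decomposition of $D$ into degree-shift components $D_k$, the Leibniz check by projecting $D(ab)=D(a)b+aD(b)$ onto the $(i+j+k)$-graded piece, and the directness of the sum all go through. One cosmetic remark: the cleanest justification that $D_k=0$ for $|k|>t$ is that then $i+k\notin[0,t]$ for \emph{every} $i\in[0,t]$, so $D_k$ annihilates every graded piece; your phrasing via nonvanishing on $A_0$ and $A_t$ alone is slightly narrower in its literal form, but the conclusion is the same.
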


N. Hussain, G. Ma, S. S.-T. Yau, and H. Zuo proposed the following conjecture regarding the non-existence of negative weight derivations on $\mathcal{L}_n(f)$.

\begin{conjecture}[{\cite[Conjecture 1.7]{HMYZ2023}}]\label{conjecture:derivation}
Let $f\in\pol$ be a weighted homogeneous polynomial of weight $w\in\NN^s$ and degree $d$ that defines an isolated hypersurface singularity. Suppose that $d\geq 2w_1\geq \cdots\geq 2w_s>0$. Then there is no non-zero negative weight derivation on $\cT_n(f)$, i.e., $\mathcal{L}_n(f)$ is non-negatively graded.  
\end{conjecture}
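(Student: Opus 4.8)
The plan is to split the argument according to the order $n$, and for $n\geq 2$ to reduce the entire statement to a single degree estimate on the maximal minors. For $n=1$ the ideal $\cJ_1(f)$ is the ordinary Jacobian ideal $\langle \partial f/\partial x_1,\ldots,\partial f/\partial x_s\rangle$, $\cT_1(f)$ is the Tjurina algebra and $\cL_1(f)$ the Yau algebra; here the non-existence of negative weight derivations is the classical theorem on weighted homogeneous isolated singularities, which I would simply invoke. So assume $n\geq 2$. By Theorem \ref{theorem:Tjurina-graded} the ideal $J:=\langle f,\cJ_n(f)\rangle$ is weighted homogeneous, so $A:=\pol/J$ is graded and Artinian and, by Lemma \ref{der graded}, $\cL_n(f)=\Der(A)$ carries the grading $\bigoplus_k L_k$. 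A nonzero homogeneous $\bar D\in L_k$ lifts to a weighted-homogeneous derivation $D=\sum_{i=1}^s g_i\,\partial/\partial x_i$ of $\pol$ with $\deg g_i=w_i+k$ and $D(J)\subseteq J$ (the lift automatically preserves $J$, and $D(f)\in J$ since $f\in J$); as $A$ is non-negatively graded, $\bar D\neq 0$ forces $w_i+k\geq 0$ for some $i$, hence $-w_1\leq k\leq -1$. The goal is to exclude every such $k$.

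The engine of the argument is a syzygy observation valid for all $s$. Suppose one knows that $D(f)=0$ in $\pol$. Then $(g_1,\ldots,g_s)$ is a weighted-homogeneous syzygy of $(\partial f/\partial x_1,\ldots,\partial f/\partial x_s)$ sitting in internal degree $d+k$. Because $f$ defines an isolated singularity, its partial derivatives form a regular sequence in $\pol$, so every first syzygy is Koszul; the Koszul syzygies live in internal degrees $2d-w_i-w_j\geq 2d-2w_1\geq d$, strictly above $d+k<d$. Hence the only syzygy in degree $d+k$ is the zero one, which forces $g_1=\cdots=g_s=0$ and $\bar D=0$. Thus, for $n\geq 2$, the whole conjecture reduces to proving $D(f)=0$.

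Now $D(f)=\sum_i g_i\,\partial f/\partial x_i\in J$ is weighted homogeneous of degree $d+k<d$, so it suffices to show that $J$ has no nonzero element of degree $<d$. As $f$ contributes degree exactly $d$, and $\cJ_n(f)$ is generated by the maximal minors of $\Jac_n(f)$, each weighted homogeneous by Theorem \ref{theorem:Tjurina-graded}, this is equivalent to the degree bound: every maximal minor of $\Jac_n(f)$ has weighted degree $\geq d$. For $s=2$ this is precisely Corollary \ref{coro:s=2}, so the syzygy engine recovers Theorem \ref{theorem-derivation}; the remaining task is to establish the same bound for arbitrary $s$. The identity $\sum_{\alpha\in C_n}\alpha\cdot w=|w|M$ of Lemma \ref{lemma:combinatorics} generalizes verbatim to all $s$, since each coordinate contributes equally by the symmetry of $C_n$, and I would try to run a matching argument in the spirit of Lemma \ref{lemma:general-sets}: inject the ``small'' multi-indices indexing the chosen rows (or the columns omitted from $C_n$) into the ``large'' elements of $C_n$ missing from the columns, arranged so that $\sum_{\beta}\beta\cdot w$ dominates $\sum_{\alpha\in\mathcal{A}}\alpha\cdot w$ and the estimate $|w|M\geq c+|w|$ survives.

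The main obstacle is exactly this last step for $s\geq 3$. In the case $s=2$ the hypothesis $d\geq 2w_1$ implies $d\geq w_1+w_2=|w|$, and it is this inequality that upgrades $c\leq|w|(M-1)$ to the needed $c\leq d(M-1)$; for $s\geq 3$ one only controls $2w_1$, and $2w_1<|w|$ is possible (for instance $w=(1,1,1)$), so even the passage ``degree bound $\Rightarrow$ result'' may require the sharper $c\leq d(M-1)$ rather than $c\leq|w|(M-1)$, and the clean pairing of Lemma \ref{lemma:general-sets} no longer closes up combinatorially. I would therefore expect to need either a genuinely multidimensional rearrangement inequality adapted to the ordering $w_1\geq\cdots\geq w_s$, or a refinement that also exploits the constraints $D(m)\in J$ for the higher-degree minors $m$ (not only $D(f)=0$) to compensate when the naive degree bound is unavailable. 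Settling this combinatorial and structural point uniformly in $s$ is the crux of the conjecture.
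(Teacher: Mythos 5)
The statement you were asked about is a conjecture, and the paper itself does not prove it in full: its contribution is Theorem \ref{theorem-derivation} ($s=2$, $n\geq3$), which together with the $s=n=2$ case of \cite{HMYZ2023} settles two variables only. Your proposal is in the same position --- it closes exactly the cases where the degree bound on the maximal minors is available --- and you are right that the bound $c\leq d(M-1)$ for $s\geq 3$ is the genuine crux, so neither text can claim the conjecture; the comparison below concerns the cases that are actually proved. There your route shares the paper's skeleton but diverges at the endgame. Both arguments lift a negative-degree $\overline{D}$ to $D=\sum_i h_i\partial_{x_i}$ with $D(\langle f,\cJ_n(f)\rangle)\subseteq\langle f,\cJ_n(f)\rangle$, and both use Corollary \ref{coro:s=2} to force $D(f)=0$, since $D(f)$ is homogeneous of degree $d+k<d$ while every generator of the ideal has degree $\geq d$. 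But the paper then argues in a strictly two-variable way: it invokes \cite[Lemma 2.5]{XY1996} to reduce $D$ to the form $cx_2^b\partial_{x_1}$, and the normal forms of \cite[Lemma 2.1]{CXY1995} to see $\partial_{x_1}f\neq 0$, hence $c=0$. You instead observe that $(h_1,\ldots,h_s)$ is a weighted-homogeneous syzygy of the partials in internal degree $d+k<d$, whereas the partials form a regular sequence (isolated singularity), so all syzygies are Koszul and sit in internal degrees $2d-w_i-w_j\geq 2d-2w_1\geq d$; hence $D=0$. This is correct, needs no normal forms or structure theory of $D$, and is uniform in $s$: it genuinely improves on the paper's proof in that it reduces the \emph{entire} conjecture to the single combinatorial statement that every nonzero maximal minor of $\Jac_n(f)$ has weighted degree $\geq d$. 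Your diagnosis of why the paper's combinatorics stop at $s=2$ is also accurate: Lemma \ref{lemma:general-sets} produces the estimate $c\leq |w|(M-1)$, which suffices only because $d\geq 2w_1\geq w_1+w_2=|w|$ in two variables, while for $s\geq 3$ the hypothesis $d\geq 2w_1$ no longer gives $d\geq |w|$ (e.g. $w=(1,1,1)$, $d=2$), so a sharper, $d$-based bound is needed.

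Two small caveats. First, your reduction covers $n=2$, $s=2$ only if the degree bound holds for $\Jac_2(f)$, which the paper never proves (Corollary \ref{coro:s=2} assumes $n\geq 3$); that case is known only through the direct computation in \cite{HMYZ2023}, so you should not fold it silently into the syzygy engine. Second, your $n=1$ step rests on the classical nonexistence theorem for moduli algebras; this is indeed where the hypothesis $d\geq 2w_1\geq\cdots\geq 2w_s$ originates (\cite{CXY1995}), but since your argument quotes it for arbitrary $s$, it deserves an explicit citation and a check of its exact hypotheses rather than the phrase ``which I would simply invoke.''
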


The same authors verified Conjecture \ref{conjecture:derivation} in the case $s=n=2$ \cite[Theorem B]{HMYZ2023}. The following theorem gives a positive answer to Conjecture \ref{conjecture:derivation} for $s=2$ and $n\geq3$. Together, these results settle the conjecture for two variables.

\begin{theorem}\label{theorem-derivation}
Let $f\in\CC[x_1,x_2]$ be a weighted homogeneous polynomial of weight $(w_1,w_2)\in\NN^2$ and degree $d$ that defines an isolated hypersurface singularity. Suppose that $d\geq 2w_1\geq 2w_2>0$. Let $n\geq3$. Then $\Der(\pTdos)$ is non-negatively graded.
\end{theorem}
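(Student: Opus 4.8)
The plan is to rule out nonzero derivations of negative weight directly, using Corollary \ref{coro:s=2} to control the bottom of the defining ideal. First I would fix the grading setup: by Theorem \ref{theorem:Tjurina-graded} the ideal $I:=\langle f,\cJ_n(f)\rangle$ is weighted homogeneous, so $A:=\pTdos$ is a graded Artinian local algebra, and Lemma \ref{der graded} writes $\Der(A)=\bigoplus_k L_k$. It then suffices to show $L_k=0$ for every $k<0$. The decisive input is that, by Corollary \ref{coro:s=2}, every generator of $I$ — namely $f$ and each maximal minor of $\Jac_n(f)$ — has weighted degree $\geq d$. Hence every nonzero weighted-homogeneous element of $I$ has degree $\geq d$, so the quotient map $R_e\to A_e$ is an isomorphism for all $e<d$, where $R=\CC[x_1,x_2]$.

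Next I would take $0\neq D\in L_k$ with $k<0$ and let $g_i$ be the unique homogeneous polynomial representing $D(\overline{x}_i)\in A_{w_i+k}$. Since $k<0$ and, from $d\geq 2w_1\geq 2w_2$, we have $w_i\leq w_1\leq d/2$, the degree satisfies $w_i+k<d$; by the isomorphism $R_{w_i+k}\cong A_{w_i+k}$ the representative $g_i$ is well defined (and equals $0$ when $w_i+k<0$). Because $D$ descends to $A$, its lift $\widetilde{D}=g_1\partial_{x_1}+g_2\partial_{x_2}$ must satisfy $\widetilde{D}(f)\in I$; but
\[
\widetilde{D}(f)=g_1 f_{x_1}+g_2 f_{x_2}
\]
is homogeneous of degree $d+k<d$, and $I$ contains no nonzero element of degree $<d$, forcing $g_1 f_{x_1}+g_2 f_{x_2}=0$ in $R$. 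Thus $(g_1,g_2)$ is a syzygy of the pair $(f_{x_1},f_{x_2})$.

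Finally I would invoke the isolated-singularity hypothesis. Since $f$ is weighted homogeneous, Euler's relation gives $f\in\langle f_{x_1},f_{x_2}\rangle$, so the singular cone $V(f_{x_1},f_{x_2})$ equals $\{0\}$; hence $f_{x_1},f_{x_2}$ form a regular sequence in $R$ and the Koszul complex identifies all their syzygies as multiples of $(f_{x_2},-f_{x_1})$. Writing $(g_1,g_2)=h\cdot(f_{x_2},-f_{x_1})$ and comparing degrees yields
\[
\deg h = w_1+w_2+k-d \leq k < 0,
\]
using $w_1+w_2\leq 2w_1\leq d$. Therefore $h=0$, so $g_1=g_2=0$, and since $A$ is generated by $\overline{x}_1,\overline{x}_2$ we get $D=0$, a contradiction.

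I expect the only genuinely delicate point to be the low-degree bookkeeping that guarantees $R_e\cong A_e$ for $e<d$ and makes the representatives $g_i$ honest polynomials; once that is in place, the forced vanishing $\widetilde{D}(f)=0$, the Koszul description of syzygies, and the degree count are all routine. It is worth emphasizing that the hypothesis $n\geq 3$ enters solely through Corollary \ref{coro:s=2}: it is precisely the degree bound on the minors that fails for $n=2$, which is why that case required separate treatment.
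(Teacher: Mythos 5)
Your proof is correct, and while it pivots on the same key input as the paper --- namely Corollary \ref{coro:s=2}, which forces every nonzero homogeneous element of $\langle f,\cJ_n(f)\rangle$ to have weighted degree at least $d$ and hence forces the image of $f$ under a negative-weight derivation to vanish --- the way you finish is genuinely different. The paper lifts the derivation to $\CC[x_1,x_2]$ via a cited result of Yau--Zuo, then uses $D(\langle x_1,x_2\rangle)\subset\langle x_1,x_2\rangle$ together with $w_1\geq w_2$ and negativity of the weight to pin the derivation down to the form $cx_2^b\partial_{x_1}$, and finally invokes the Chen--Xu--Yau normal forms of $f$ to see that $\partial_{x_1}(f)\neq 0$, whence $c=0$. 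You instead construct the lift by hand (legitimately, since your isomorphism $R_e\to A_e$ for $e<d$ follows from the same degree bound on the generators), keep both components $g_1,g_2$, and observe that the forced identity $g_1f_{x_1}+g_2f_{x_2}=0$ exhibits a syzygy of $(f_{x_1},f_{x_2})$; since the isolated-singularity hypothesis together with the Euler relation makes $f_{x_1},f_{x_2}$ a regular sequence, the syzygy is Koszul, and the degree count $\deg h=w_1+w_2+k-d\leq k<0$ kills it. Your route avoids three external citations and the case analysis on normal forms of $f$, at the cost of a slightly longer setup; it is also arguably more robust, since it uses only the regular-sequence property of the partial derivatives rather than any classification of weighted homogeneous polynomials in two variables. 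Both arguments correctly isolate the hypothesis $n\geq 3$ as entering only through Corollary \ref{coro:s=2}.
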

\begin{proof}
Let $\overline{D}$ be a derivation of negative degree $-k$ of $\pTdos$. Then $\overline{D}$ corresponds to a homogeneous derivation $D$ of $\CC[x_1,x_2]$ such that $D(\langle f,\cJ_n(f)\rangle)\subset \langle f,\cJ_n(f)\rangle$ \cite[Theorem 2.2]{YZ2016}. Since $D$ is homogeneous, there are weighted homogeneous polynomials $h_1,h_2$ of degree $d_i=-k+w_i$, $i=1,2$, such that $D=h_1\partial_{x_1}+h_2\partial_{x_2}$. It is known that $D(\langle x_1,x_2 \rangle)\subset \langle x_1,x_2 \rangle$ \cite[Lemma 2.5]{XY1996}. This fact, together with $D$ being of negative degree and $w_1\geq w_2$, implies that $h_2=0$ and $h_1=cx_2^b$, for some $c\in\CC$ and $b\geq1$. Hence, $D=cx_2^b\partial_{x_1}$.

By the assumptions on the weight and degree of $f$, and the fact that $f$ defines an isolated singularity, we have that $f$ must be one of the following cases 
\cite[Lemma 2.1]{CXY1995}:
\begin{itemize}
\item[(1)] $f=x_1^m+a_1(x_2)x_1^{m-1}+\cdots+a_m(x_2)$.
\item[(2)] $f=x_1^mx_2+a_1(x_2)x_1^{m-1}+\cdots+a_m(x_2)$.
\end{itemize}


By Corollary \ref{coro:s=2}, $f$ is a generator of smallest degree among the generators of $\langle f,\cJ_n(f)\rangle$. Since $D(f)\in\langle f,\cJ_n(f)\rangle$ and $D$ is of negative degree, it follows that $D(f)=0$, i.e., $cx_2^b\partial_{x_1}(f)=0$. Using (1) and (2) we obtain $\partial_{x_1}(f)\neq0$. We conclude that $c=0$ and so $D=0$.
\end{proof}

\begin{remark}
Consider the notation of Theorem \ref{theorem-derivation}. If $f\in\langle x_1,x_2\rangle^3$, then we can assume $d\geq 2w_1\geq 2w_2>0$ without loss of generality \cite[Theorem 2.1]{MR3490075}, \cite{Saito1971}.
\end{remark}

\section*{Acknowledgements}
The first author was supported by UNAM Posdoctoral Program (POSDOC). The second author was supported with project IN100723, ``Curvas, Sistemas lineales en superficies proyectivas y fibrados vectoriales? from DGAPA, UNAM. The third author is  supported by CONAHCYT project CF-2023-G-33.



\begin{thebibliography}{CMDGF21}

\bibitem[Bar23]{Barajas2023}
Paul Barajas.
\newblock High-order derivations of the hasse-schmidt algebra.
\newblock {\em arXiv:2305.04537}, 2023.

\bibitem[BD20]{BD2020}
Paul Barajas and Daniel Duarte.
\newblock On the module of differentials of order {$n$} of hypersurfaces.
\newblock {\em J. Pure Appl. Algebra}, 224(2):536--550, 2020.

\bibitem[BJNB19]{BJNB2019}
Holger Brenner, Jack Jeffries, and Luis N\'{u}{\~n}ez-Betancourt.
\newblock Quantifying singularities with differential operators.
\newblock {\em Adv. Math.}, 358:106843, 89, 2019.

\bibitem[CCYZ20]{CCYZ2020}
Bingyi Chen, Hao Chen, Stephen S.-T. Yau, and Huaiqing Zuo.
\newblock The nonexistence of negative weight derivations on positive
  dimensional isolated singularities: generalized {W}ahl conjecture.
\newblock {\em J. Differential Geom.}, 115(2):195--224, 2020.

\bibitem[Che99]{H1999}
Hao Chen.
\newblock Nonexistence of negative weight derivations on graded {A}rtin
  algebras: a conjecture of {H}alperin.
\newblock {\em J. Algebra}, 216(1):1--12, 1999.

\bibitem[CHYZ20]{CHYZ2020}
Bingyi Chen, Naveed Hussain, Stephen S.-T. Yau, and Huaiqing Zuo.
\newblock Variation of complex structures and variation of {L}ie algebras {II}:
  {N}ew {L}ie algebras arising from singularities.
\newblock {\em J. Differential Geom.}, 115(3):437--473, 2020.

\bibitem[CMDGF21]{MR4229623}
Enrique Ch\'{a}vez-Mart\'{\i}nez, Daniel Duarte, and Arturo Giles~Flores.
\newblock A higher-order tangent map and a conjecture on the higher {N}ash
  blowup of curves.
\newblock {\em Math. Z.}, 297(3-4):1767--1791, 2021.

\bibitem[CXY95]{CXY1995}
Hao Chen, Yi-Jing Xu, and Stephen S.-T. Yau.
\newblock Nonexistence of negative weight derivation of moduli algebras of
  weighted homogeneous singularities.
\newblock {\em J. Algebra}, 172(2):243--254, 1995.

\bibitem[CYZ19]{CYZ2019}
Hao Chen, Stephen S.-T. Yau, and Huaiqing Zuo.
\newblock Non-existence of negative weight derivations on positively graded
  {A}rtinian algebras.
\newblock {\em Trans. Amer. Math. Soc.}, 372(4):2493--2535, 2019.

\bibitem[dAD21]{AD2021}
Hern\'{a}n de~Alba and Daniel Duarte.
\newblock On the {$k$}-torsion of the module of differentials of order {$n$} of
  hypersurfaces.
\newblock {\em J. Pure Appl. Algebra}, 225(8):Paper No. 106646, 7, 2021.

\bibitem[DGI20]{MR4129535}
Alexandru Dimca, Rodrigo Gondim, and Giovanna Ilardi.
\newblock Higher order {J}acobians, {H}essians and {M}ilnor algebras.
\newblock {\em Collect. Math.}, 71(3):407--425, 2020.

\bibitem[DS15]{MR3398724}
Alexandru Dimca and Gabriel Sticlaru.
\newblock Hessian ideals of a homogeneous polynomial and generalized {T}jurina
  algebras.
\newblock {\em Doc. Math.}, 20:689--705, 2015.

\bibitem[Dua17]{Duarte2017}
Daniel Duarte.
\newblock Computational aspects of the higher {N}ash blowup of hypersurfaces.
\newblock {\em J. Algebra}, 477:211--230, 2017.

\bibitem[FHT01]{FHT2001}
Yves F\'{e}lix, Stephen Halperin, and Jean-Claude Thomas.
\newblock {\em Rational homotopy theory}, volume 205 of {\em Graduate Texts in
  Mathematics}.
\newblock Springer-Verlag, New York, 2001.

\bibitem[GLS07]{MR2290112}
G.-M. Greuel, C.~Lossen, and E.~Shustin.
\newblock {\em Introduction to singularities and deformations}.
\newblock Springer Monographs in Mathematics. Springer, Berlin, 2007.

\bibitem[GP17]{MR3606998}
Gert-Martin Greuel and Thuy~Huong Pham.
\newblock Mather-{Y}au theorem in positive characteristic.
\newblock {\em J. Algebraic Geom.}, 26(2):347--355, 2017.

\bibitem[HMYZ23]{HMYZ2023}
Naveed Hussain, Guorui Ma, Stephen S.-T. Yau, and Huaiqing Zuo.
\newblock Higher {N}ash blow-up local algebras of singularities and its
  derivation {L}ie algebras.
\newblock {\em J. Algebra}, 618:165--194, 2023.

\bibitem[LDS24]{LDS2023}
Yann Le~Dr\'eau and Julien Sebag.
\newblock Arc scheme and higher differential forms.
\newblock {\em Osaka J. Math.}, 61(3):381--390, 2024.

\bibitem[LY25]{LY2023}
Quy~Thuong L\^e and Takehiko Yasuda.
\newblock Higher {J}acobian ideals, contact equivalence and motivic zeta
  functions.
\newblock {\em Rev. Mat. Iberoam.}, 41(3):1057--1080, 2025.

\bibitem[Mei82]{M1982}
W.~Meier.
\newblock Rational universal fibrations and flag manifolds.
\newblock {\em Math. Ann.}, 258(3):329--340, 1981/82.

\bibitem[MY82]{MR0674404}
John~N. Mather and Stephen S.~T. Yau.
\newblock Classification of isolated hypersurface singularities by their moduli
  algebras.
\newblock {\em Invent. Math.}, 69(2):243--251, 1982.

\bibitem[MYZ23]{MR4594780}
Guorui Ma, Stephen S.-T. Yau, and Huaiqing Zuo.
\newblock Nonexistence of negative weight derivations of the local {$k$}-th
  {H}essian algebras associated to isolated singularities.
\newblock {\em Pacific J. Math.}, 323(1):129--172, 2023.

\bibitem[Sai71]{Saito1971}
Kyoji Saito.
\newblock Quasihomogene isolierte {S}ingularit\"{a}ten von {H}yperfl\"{a}chen.
\newblock {\em Invent. Math.}, 14:123--142, 1971.

\bibitem[SY90]{MR1032879}
Craig Seeley and Stephen S.-T. Yau.
\newblock Variation of complex structures and variation of {L}ie algebras.
\newblock {\em Invent. Math.}, 99(3):545--565, 1990.

\bibitem[XY96]{XY1996}
Yi-Jing Xu and Stephen S.-T. Yau.
\newblock Micro-local characterization of quasi-homogeneous singularities.
\newblock {\em Amer. J. Math.}, 118(2):389--399, 1996.

\bibitem[Yas07]{MR2371378}
Takehiko Yasuda.
\newblock Higher {N}ash blowups.
\newblock {\em Compos. Math.}, 143(6):1493--1510, 2007.

\bibitem[Yau86]{Y1986}
Stephen S.-T. Yau.
\newblock Solvable {L}ie algebras and generalized {C}artan matrices arising
  from isolated singularities.
\newblock {\em Math. Z.}, 191(4):489--506, 1986.

\bibitem[Yau91]{Y1991}
Stephen S.-T. Yau.
\newblock Solvability of {L}ie algebras arising from isolated singularities and
  nonisolatedness of singularities defined by {${\rm sl}(2,{\bf C})$} invariant
  polynomials.
\newblock {\em Amer. J. Math.}, 113(5):773--778, 1991.

\bibitem[YZ16a]{YZ2016}
Stephen S.-T. Yau and Huai~Qing Zuo.
\newblock A sharp upper estimate conjecture for the {Y}au number of a weighted
  homogeneous isolated hypersurface singularity.
\newblock {\em Pure Appl. Math. Q.}, 12(1):165--181, 2016.

\bibitem[YZ16b]{MR3490075}
Stephen S.-T. Yau and Huaiqing Zuo.
\newblock Derivations of the moduli algebras of weighted homogeneous
  hypersurface singularities.
\newblock {\em J. Algebra}, 457:18--25, 2016.

\end{thebibliography}

\end{document}